%% LyX 2.3.0 created this file.  For more info, see http://www.lyx.org/.
%% Do not edit unless you really know what you are doing.
\documentclass[11pt,twoside]{amsart}
\usepackage[a4paper]{geometry}
\geometry{verbose,lmargin=3cm,rmargin=3cm}
\setcounter{tocdepth}{5}
\setlength{\parskip}{\smallskipamount}
\setlength{\parindent}{0pt}
\usepackage{amstext}
\usepackage{amsthm}
\usepackage{amssymb}
\usepackage{stmaryrd}
\usepackage[unicode=true,pdfusetitle,
 bookmarks=true,bookmarksnumbered=false,bookmarksopen=false,
 breaklinks=false,pdfborder={0 0 1},backref=false,colorlinks=false]
 {hyperref}

\makeatletter

%%%%%%%%%%%%%%%%%%%%%%%%%%%%%% LyX specific LaTeX commands.
\newcommand{\lyxmathsym}[1]{\ifmmode\begingroup\def\b@ld{bold}
  \text{\ifx\math@version\b@ld\bfseries\fi#1}\endgroup\else#1\fi}

%%%%%%%%%%%%%%%%%%%%%%%%%%%%%% Textclass specific LaTeX commands.
\numberwithin{equation}{section}
\numberwithin{figure}{section}
\theoremstyle{plain}
\newtheorem*{thm*}{\protect\theoremname}
\theoremstyle{plain}
\newtheorem{thm}{\protect\theoremname}
\theoremstyle{plain}
\newtheorem{lem}[thm]{\protect\lemmaname}
\theoremstyle{remark}
\newtheorem{rem}[thm]{\protect\remarkname}
\theoremstyle{plain}

\theoremstyle{definition}
\newtheorem{defn}[thm]{\protect\definitionname}
\theoremstyle{definition}

\theoremstyle{plain}
\newtheorem{pro}[thm]{\protect\propositionname}
\theoremstyle{plain}
\newtheorem{asm}[thm]{\protect\assumptionsname}
%\renewcommand*{\theasm}{\Alph{asm}}

%%%%%%%%%%%%%%%%%%%%%%%%%%%%%% User specified LaTeX commands.
%\documentclass[10pt,a4paper]{amsart}
%\usepackage{amsaddr}
\usepackage{amssymb,amsthm,amsmath,amsfonts,amscd}
\usepackage{graphicx}
\usepackage{url}
\usepackage{color}
\usepackage[active]{srcltx}
\usepackage[matrix,arrow]{xy}
\usepackage{mathrsfs}
\usepackage{enumitem}
\usepackage{amsopn} % provides \DeclareMathOperator
\usepackage{bbm} % additional fonts
\usepackage[normalem]{ulem} %underline etc.
\usepackage{cite}
\usepackage{hyperref}
\allowdisplaybreaks[1]
\usepackage[english]{babel}
\usepackage{bbm}
\usepackage{mhchem}
\usepackage{mdef}
\date{\today}
%\usepackage{showkeys}
%\allowdisplaybreaks

\makeatother

\providecommand{\corollaryname}{Corollary}
\providecommand{\definitionname}{Definition}
\providecommand{\examplename}{Example}
\providecommand{\lemmaname}{Lemma}
\providecommand{\remarkname}{Remark}
\providecommand{\theoremname}{Theorem}
\providecommand{\propositionname}{Proposition}
\providecommand{\assumptionsname}{Assumptions}

\begin{document}

\title[Stochastic Fokker-Planck equations]{Stochastic nonlinear Fokker-Planck equations}
\begin{abstract}
The existence and uniqueness of measure-valued solutions to stochastic nonlinear, non-local Fokker-Planck equations is proven. This type of stochastic PDE is shown to arise in the mean field limit of weakly interacting diffusions with common noise. The uniqueness of solutions is obtained without any higher moment assumption on the solution by means of a duality argument to a backward stochastic PDE.
\end{abstract}

\thanks{Acknowledgements: The authors acknowledge financial support by the the Max Planck Society through the Max Planck Research Group "Stochastic partial differential equations" and by the DFG through the CRC 1283 "Taming uncertainty and profiting from randomness and low regularity in analysis, stochastics and their applications."}

\author{Michele Coghi}
\address[M. Coghi]{Weierstrass Institute for Applied Stochastic, Mohrenstrasse 39, 10117 Berlin and Faculty of Mathematics,
	University of Bielefeld,
	33615 Bielefeld,
	Germany}
\email{michele.coghi@gmail.com}

\author{Benjamin Gess}
\address[B. Gess]{Max Planck Institute for Mathematics in the Sciences, Inselstrasse 22, 04103 Leipzig and Faculty of Mathematics,
University of Bielefeld,
33615 Bielefeld,
Germany}
\email{benjamin.gess@gmail.com}

\maketitle

\section{Introduction}

We consider the following stochastic nonlinear, non-local  Fokker-Planck equation\footnote{with Einstein summation convention, and $\partial_i$ being the partial derivative with respect to the space variable $x_i$.} on $[0,T]\times \mathbb{R}^d$,
\begin{equation}
\label{target equation}
	\left\{
	\begin{array}{l}
		\partial_t \mu
		= \partial^2_{i,j}(a^{ij}(t,x,\mu) \mu)
		- \partial_{i} (b^i(t,x,\mu) \mu)  
		- \partial_{i}(\sigma^{ik}(t,x,\mu)\mu) dW^k_t,\\
		\mu|_{t=0} = \mu_0,
	\end{array}
	\right.
\end{equation}
where $(\Omega, \mathcal{F}, (\mathcal{F}_t)_{t\in[0,T]}, \mathbb{P})$ is a filtered probability space, $W_t$ is a $d_1$-dimensional $(\mathcal{F}_t)_{t\in[0,T]}$-Brownian motion for some $d_1 \in \mathbb{N}$, $a = (a^{ij}): [0,T] \times \mathbb{R}^d \times \mathcal{M}(\mathbb{R}^d)  \to \mathbb{R}^{d\times d}$ is a function from the space of measures on $\mathbb{R}^d$ to the space of symmetric and non-negative definite matrices, $\sigma = (\sigma^{ik}): [0,T] \times \mathbb{R}^d \times \mathcal{M}(\mathbb{R}^d) \to \mathbb{R}^{d\times d_1}$ takes values in the space of $d \times d_1$ matrices and $b = (b^i): [0,T] \times \mathbb{R}^d \times \mathcal{M}(\mathbb{R}^d) \to \mathbb{R}^d$. We emphasize that the  coefficients $a^{ij},b^i,\sigma^{ik}$ depend non-locally and possibly nonlinearly on the solution $\mu$.

Since stochastic PDE of the type \eqref{target equation} describe the evolution of conditional distributions of solutions to McKean-Vlasov SDE with common noise (see below) it is natural to consider solutions $(\mu_t)_{t\in [0,T]}$ to  \eqref{target equation} taking values in the space of finite non-negative measures on $\mathbb{R}^d$. The main result of this paper is to establish the well-posedness of measure-valued solutions to \eqref{target equation}. % and consequently also to the SSCL \eqref{SSCL}. We have the following 
\begin{thm*}[see Theorem \ref{thm:uniqueness spde} and \ref{thm:existence spde} below]
	Let $\mu_0 \in \mathcal{M}(\mathbb{R}^d)$ be a non-negative measure. If the coefficients $a,b,\sigma$  are regular enough, then there exists a unique solution $\mu \in L^1_{\omega}C_t\mathcal{M}$ to equation \eqref{target equation} in the sense of Definition \ref{definition of solution}, below.
\end{thm*}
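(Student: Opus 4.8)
The plan is to prove the two halves separately: existence via a probabilistic representation as the conditional law of a McKean–Vlasov SDE with common noise, and uniqueness via a duality argument against a backward stochastic PDE (BSPDE). The duality route for uniqueness is forced on us by the absence of moment assumptions: testing the equation against $\mu$ itself would require control of second moments, which we do not wish to impose, whereas pairing $\mu$ against smooth functions only needs $\mu$ to be a finite measure.

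For existence, I would exploit the interpretation of \eqref{target equation} as the equation for the conditional law of a diffusion driven by an idiosyncratic and a common noise. Since $a$ is symmetric and non-negative definite, and assuming $2a-\sigma\sigma^\top\ge 0$ (part of the ``regular enough'' hypotheses), I would factor $2a(t,x,\mu)-\sigma\sigma^\top(t,x,\mu)=\beta\beta^\top$ and consider the SDE
\[
dX_t=b(t,X_t,\mu_t)\,dt+\beta(t,X_t,\mu_t)\,dB_t+\sigma(t,X_t,\mu_t)\,dW_t,\qquad \mu_t=\mathcal{L}(X_t\mid\mathcal{F}^W_t),
\]
where $B$ is an independent Brownian motion and $W$ is the common noise. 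I would solve this by a fixed-point argument in the space of $\mathcal{F}^W$-adapted, measure-valued processes equipped with a Wasserstein or bounded-Lipschitz distance, using Lipschitz continuity of the coefficients in $\mu$; applying Itô's formula to $\varphi(X_t)$ and taking conditional expectations given $\mathcal{F}^W_t$ then shows that $\mu_t$ solves \eqref{target equation} in the sense of Definition \ref{definition of solution}. A tightness and limiting argument would cover the case where $\beta$ is merely measurable.

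For uniqueness, given two solutions $\mu^1,\mu^2$ with the same initial datum, I would freeze the coefficients along $\mu^1$ and, for each terminal test function $\varphi\in C^\infty_c(\mathbb{R}^d)$, solve the linear BSPDE
\[
-df_t=\bigl(a^{ij}(t,\cdot,\mu^1_t)\partial^2_{ij}f_t+b^i(t,\cdot,\mu^1_t)\partial_i f_t+\sigma^{ik}(t,\cdot,\mu^1_t)\partial_i g^k_t\bigr)\,dt-g^k_t\,dW^k_t,\qquad f_T=\varphi,
\]
with solution pair $(f,g)$. Writing $\nu=\mu^1-\mu^2$ and decomposing $a^{ij}(\mu^1)\mu^1-a^{ij}(\mu^2)\mu^2=a^{ij}(\mu^1)\nu+\bigl(a^{ij}(\mu^1)-a^{ij}(\mu^2)\bigr)\mu^2$ (and similarly for $b,\sigma$), the linear part of the equation for $\nu$ is exactly dual to the BSPDE, so the Itô product rule for $\langle\nu_t,f_t\rangle$ cancels it and leaves only source terms which, by Lipschitz continuity of the coefficients in the measure argument, are controlled by the distance between $\mu^1_s$ and $\mu^2_s$. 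This yields $\mathbb{E}\langle\nu_T,\varphi\rangle\le C\int_0^T\mathbb{E}\,d(\mu^1_s,\mu^2_s)\,ds$; taking the supremum over $\varphi$ in a norming class and applying Gronwall's lemma gives $\mu^1=\mu^2$.

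The main obstacle lies entirely in the uniqueness step, and within it in the BSPDE. To pair its solution against the merely measure-valued $\nu$, I need $f_t\in C^2_b(\mathbb{R}^d)$ together with quantitative, deterministic bounds on $\|f_t\|_{C^2_b}$ that are stable enough to close the Gronwall estimate. Establishing this Schauder-type regularity for the backward stochastic equation, and rigorously justifying the Itô product formula for the pairing of a measure-valued forward process with the BSPDE solution, is where most of the work resides; it is precisely this requirement that dictates the ``regular enough'' assumptions on $a,b,\sigma$.
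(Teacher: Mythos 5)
Your existence argument is essentially the paper's: factor $2a-\sigma\sigma^\top=\beta\beta^\top$, solve the conditional McKean--Vlasov SDE by a fixed point in $\mathcal{W}$-adapted measure-valued processes under the bounded-Lipschitz metric, and apply It\^o's formula plus conditional expectation given $\mathcal{W}$. That half is fine. Your uniqueness argument, however, takes a genuinely different route from the paper and contains a gap. You freeze the coefficients along $\mu^1$ only, so the duality computation for $\langle\nu_t,f_t\rangle$ leaves source terms of the form $\langle(a(\mu^1)-a(\mu^2))\!:\!\nabla^2 f_s,\mu^2_s\rangle$, $\langle(b(\mu^1)-b(\mu^2))\cdot\nabla f_s,\mu^2_s\rangle$ and, crucially, the cross-variation term $\langle(\sigma^{ik}(\mu^1)-\sigma^{ik}(\mu^2))\partial_i g^k_s,\mu^2_s\rangle$. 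To bound the last one by $C\,\rho(\mu^1_s,\mu^2_s)$ you need a pointwise (or at least deterministically integrable) bound on $\|g_s\|_{C^1}$; the available regularity theory for degenerate BSPDEs (Du--Tang--Zhang, which is what the paper invokes) yields only $g\in L^2_{\omega,t}C^1_x$, and Cauchy--Schwarz then produces $\bigl(\mathbb{E}\int_0^T\rho(\mu^1_s,\mu^2_s)^2\,ds\bigr)^{1/2}$, which does not close a Gronwall loop in $\mathbb{E}\int\rho$. A second, smaller issue: your estimate controls $\sup_\varphi\mathbb{E}\langle\nu_T,\varphi\rangle$ while the Gronwall input on the right-hand side is $\mathbb{E}\sup_\varphi\langle\nu_s,\varphi\rangle$; passing the supremum inside the expectation requires random test functions with BSPDE constants uniform over the norming class, which you do not address (and which is delicate because $\mathrm{Lip}_1$ terminal data do not give uniform $C^2$ bounds on $f$).

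The paper avoids both obstacles by splitting the work differently. Duality is used only for the frozen-coefficient \emph{linear} equation with the \emph{same} coefficients on both sides: then there are no source terms at all, no quantitative bounds on $g$ are needed (only integrability, to kill the martingale terms in expectation), and the conclusion is qualitative, namely that any solution of the nonlinear equation coincides with $r\mathcal{L}(X\mid\mathcal{W})$ for the SDE with those frozen coefficients. The Lipschitz-in-$\mu$ structure is then exploited where it is cheap: in the Gronwall/contraction argument at the level of the SDE trajectories (Theorem \ref{wellposedness sde}), where only the coefficients themselves, not derivatives of a BSPDE solution, enter the estimates. If you want to salvage your direct SPDE-level Gronwall, you would need to either establish sup-norm bounds on the BSPDE martingale integrand or restructure the argument as the paper does.
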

%As a consequence we also obtain well-posedness for the stochastic scalar conservation law \eqref{SSCL}.

Previously, the uniqueness of solutions to \eqref{target equation} was known only in the class of solutions to \eqref{target equation} admitting a square-integrable density with respect to the Lebesgue measure (e.g.\ Kurtz, Xiong \cite{kurtz1999particle}). This is in contrast to the deterministic case, where the uniqueness of measure-valued solutions has recently been shown based on duality arguments by Manita, Romanov, Shaposhnikov in \cite{MR3399526, MR3113428}. Following this approach, the existence of regular enough solutions to the dual equation, a parabolic PDE backwards in time, implies the uniqueness of solutions to \eqref{target equation} with $\sigma \equiv 0$. This line of argument becomes more challenging in the case of stochastic PDE since the dual equation becomes a backward stochastic PDE (BSPDE) and, therefore, has not previously been put to use in the case of stochastic PDE, such as  \eqref{target equation}. This is the purpose of the present work. In particular, the method employed here can be seen as a proof of principle of using duality arguments to derive the uniqueness of solutions for stochastic PDE.

\subsection*{Motivation and model}
The stochastic PDE \eqref{target equation} is linked to stochastic scalar conservation laws (SSCL) of the form
\begin{equation}
\label{SSCL}
	\left\{
	\begin{array}{lcl}
	du + \operatorname{div}(\sigma(x,u)u \circ dW) = 0 & \mbox{in} & \mathbb{R}^d \times (0, T),\\
	u = u_0 & \mbox{on} & \mathbb{R}^d \times \{0\}.
	\end{array}
	\right.
\end{equation}
Indeed, rewriting equation \eqref{SSCL} in It\^{o} form (cf.\ Appendix \ref{appendix:ito strato} below), yields
\begin{equation}\label{eqn:FP_from_SCL}
\partial_t u
+ \partial_{i} (\sigma^{i,k}(x,u) u) dW^k_t 
+ \partial_{i} (b^i(x,u) u) 
=  \frac12 \partial_{i,j}^2(\sigma^{i,k}(x,u)\sigma^{k,j}(x,u) u),
\end{equation}
which is of the same type as equation \eqref{target equation}. In particular, we notice that both first order and second order correction terms appear, and that they are both nonlocal in the variable $u$. For the exact definition of $b$, we refer to \eqref{lions:drift definition} below.

Stochastic scalar conservation laws and thus stochastic PDE of the type \eqref{target equation} arise in several applications. Examples are provided by the theory of mean field systems (see Sznitman \cite{sznitman1991topics} for an overview) and mean field games with common noise introduced by Lasry and Lions \cite{MR2269875, MR2271747, MR2295621}, with an extensive treatment given by Carmona, Delarue in \cite{MR3752669, MR3753660}. 
Consider the empirical law $L^N_t := \frac{1}{N} \sum_{j=1}^{N}\delta_{X_t^j}$ of the solution $(X^1, \dots, X^N): [0,T] \times \Omega \to \mathbb{R}^{d N}$ of the weakly interacting particle system
\begin{equation}
	\label{introduction:particles}
	dX_t^i = \sigma(X_t^i, \frac{1}{N} \sum_{j=1}^{N}\delta_{X_t^j}) \circ dW, 
	\quad X^i_t\vert_{t=0} =X^i_0,
	\quad \mbox{for } i = 1, \cdots, N,
\end{equation}
with initial conditions $(X^i_0)_{i\geq 1}$ independent and identically distributed.  The random measure $L^N$ converges, as $N \to \infty$, to a random measure $u$ which evolves according to \eqref{SSCL} (cf.\ Section \ref{section:interacting particles} below). %The convergence is usually understood in the space $\mathcal{P}(\mathbb{R}^d)$ of probability measures on $\mathbb{R}^d$ endowed with the Wasserstein metric. The function $\sigma$ is usually assumed to be Lipschitz continuous in both its variables.

The above mentioned convergence of (random) empirical measures is closely linked to the phenomenon of propagation of chaos and to McKean-Vlasov SDE (cf.\ e.g.\ \cite{MR0221595, MR0233437, MR896631, MR3513594}). More precisely, in the limit $N\to\infty$, solutions to \eqref{introduction:particles}  converge to the solution to the McKean-Vlasov SDE
\begin{equation}
	\label{introduction:mckean vlasov}
	dX_t = \sigma(X_t, \mathcal{L}(X_t\mid \mathcal{W})) \circ dW, 
	\quad X_t\vert_{t=0} =X^1_0,
\end{equation}
where $\mathcal{L}(X\mid \mathcal{W})$ is the conditional law of $X$ with respect to  $W$, as explained in detail in \eqref{eq:defn conditional law} below. Given a solution $X$ to \eqref{introduction:mckean vlasov} its conditional law $\mathcal{L}(X\mid \mathcal{W})$ then satisfies \eqref{SSCL}.

Noticeably, all of the particles in \eqref{introduction:particles} are subject to the same \textit{common noise}. For this reason, no averaging effect with respect to this noise is observed and it thus survives in the limit $N\to\infty$, leading to a stochastic PDE. %Hence, the measure which appears in \eqref{introduction:mckean vlasov} is a random measure. This measure is a solution to the SSCL \eqref{SSCL}.

\subsection*{Literature}
Stochastic scalar conservation laws have been the object of several studies. In the case that $\sigma(x,u)=\sigma(u(x))$, that is, coefficients $\sigma$ depending on $u$ in a local and spatially homogeneous way, this class of stochastic PDE was introduced by Lions, Perthame, Souganidis in \cite{MR3327520}. For linear, spatially inhomogeneous coefficients, the well-posedness of entropy solutions was shown by Friz, Gess in \cite{MR3519527}. The case of local, nonlinear coefficients was later generalized to spatially inhomogeneous coefficients by Lions, Perthame, Souganidis in \cite{MR3274890}, Gess, Souganidis in \cite{MR3351442} and to include second order operators by Gess, Souganidis in \cite{MR3682120} and Fehrman, Gess in \cite {FG17}. Qualitative properties of solutions, such as regularity and finite speed of propagation has been considered by Gassiat, Gess in \cite{GG16} and Gassiat, Gess, Lions, Souganidis in \cite{GGLS18}. 

In a recent article \cite{barbu2018nonlinear}, Barbu and R\"{o}ckner treat McKean-Vlasov SDE when the dependence on the law is local, proving, roughly speaking, that if there is a solution to the scalar conservation law \eqref{SSCL}, then there also is a solution to the McKean-Vlasov equation \eqref{introduction:mckean vlasov}.

%In the case of thermal noise, that is, considering \eqref{introduction:particles} with independent noise $W^i$ on each particle, instead of a common noise, from mean-field theory (e.g.\ Sznitman \cite{sznitman1991topics}) it is well known that the empirical measure converges to a deterministic probability measure, which is a weak solution to a non-linear Fokker-Planck equation of the form of equation \eqref{target equation}, with $\sigma \equiv 0$. General conditions for the existence and uniqueness of solutions in this case have been recently studied in \cite{MR3399526, MR3113428, MR2483999, MR2640966, MR3391701}. The proof of uniqueness in the deterministic case is based on a duality method, called Holmgren principle, which we will later also apply to the stochastic equation.

The existence and uniqueness of solutions to deterministic non-linear Fokker-Planck equations of the form \eqref{target equation} with $\sigma \equiv 0$ has been recently studied by several authors in \cite{MR3399526, MR3113428, MR2483999, MR2640966, MR3391701}. 

%For the linear equation ($a$ and $b$ independent of $\mu$), the idea is to take a strong $C^2$-solution $f_t$ of the backward equation
%\begin{equation*}
%	\partial_t f + a^{i,j}\partial_{x_i}\partial_{x_j} f + b^i\partial_{x_i} f = 0, \quad f|_{s=t} = \psi,
%\end{equation*}
%where $\psi \in C_0^\infty(\mathbb{R}^d)$. Then one multiplies both sides by a solution $\mu$ and obtains
%\begin{equation}
%	\label{introduction:duality}
%	\langle \mu_t, \psi \rangle = \langle \mu_t, f_t \rangle = \langle \mu_0, f_0 \rangle.
%\end{equation}
%If the backward equation admits a solution, then the value of $\mu_t$ tested against any $\psi$ is uniquely determined, which means that the solution $\mu$ must be unique.

As mentioned above, to the best of our knowledge, the uniqueness of solutions to non-local stochastic PDE of the type \eqref{target equation} is known only in the class of solutions $\mu$ such that for each $t>0$, $u(t)$ is absolutely continuous with respect to Lebesgue measure and has a density in $L^2(\R^d)$ (cf.\ Kurtz, Xiong \cite[p.\ 115]{kurtz1999particle}). Under more restrictive conditions, either on the class of solutions or on the coefficients of \eqref{target equation}, the well-posedness of solutions to SPDE of the type \eqref{target equation} had been previously considered by Dawson, Vaillancourt in \cite{DV95}, where the uniqueness of solutions has been obtained by several methods, e.g. by constructing a dual process, by coupling arguments and by the Krylov-Rozovskii "variational" approach to SPDE.

Motivated from fluid dynamics in vorticity form, also signed measure-valued solutions to SPDE of the type \eqref{target equation} have been considered in the literature. We refer to R\'emillard, Vaillancourt \cite{RV14}, Kotelenez \cite{K10}, Kotelenez, Seadler \cite{KS12}, Amirdjanova, Xiong \cite{AX06} and the references therein. Again, uniqueness of solutions was obtained only under more restrictive assumptions. 

%. In \cite{kurtz1999particle} an infinite system of stochastic interacting particles is considered. The particles interact through their empirical measure, which is shown to be, in the limit for the number of particles that grows to infinity, the unique solution of an equation similar to \eqref{target equation}. To prove the well-posedness of equation \eqref{target equation} in the space of measures, we use a technique that is similar to the one used in \cite{kurtz1999particle} to prove existence and uniqueness of \eqref{target equation} in $L^2$.

\subsection*{Outline of the proof}

The proof of uniqueness of solution to \eqref{target equation} put forward in the present work relies on the well-posedness for the Lagrangian characteristics
\begin{equation}\label{eqn:MCKV}
\left\{
\begin{array}{ll}
dX_t &= b(X_t, \mu_t)dt + \sigma(X_t, \mu_t) dW_t + \alpha(X_t, \mu_t)dB_t, 
\quad \mu_t := \mathcal{L}(X_t\mid \mathcal{W})\\
X_t|_{t=0} &=X_0,
\end{array}
\right.
\end{equation}
where $W$ and $B$ are independent Brownian motions, $X_0$ is an independent random variable on some filtered probability space $(\Omega, \mathcal{F}, (\mathcal{F}_t)_{t\geq 0}, \mathbb{P})$ and $\alpha:=[2a-\s^T\s]^\frac{1}{2}$.

The proof of uniqueness of solutions to \eqref{target equation} proceeds by freezing the coefficients of equation \eqref{target equation} and proving the uniqueness of solutions to the resulting linear equation
\begin{equation}\begin{split}
\label{target equation_frozen}
		\partial_t \bar\mu
		&= \partial^2_{i,j}(\bar a^{ij}(t,x) \bar\mu)
		- \partial_{i} (\bar b^i(t,x) \bar\mu)  
		- \partial_{i}(\bar \sigma^{ik}(t,x)\bar\mu) dW^k_t.
\end{split}\end{equation}
At this point, in contrary to the previous work \cite{kurtz1999particle}, the uniqueness of measure-valued solutions to \eqref{target equation_frozen} has to be shown, while \cite{kurtz1999particle} was restricted to solutions allowing square-integrable densities. Here the above mentioned duality argument comes into play, on which we comment in more detail below. The uniqueness of solutions to \eqref{target equation_frozen} then implies that each solution $\mu$ to \eqref{target equation} is given as the conditional law $\mathcal{L}(X\mid \mathcal{W})$ of a solution to \eqref{eqn:MCKV}. Therefore, uniqueness to \eqref{eqn:MCKV} implies the uniqueness for \eqref{target equation}.

In order to prove the uniqueness of solutions to \eqref{target equation_frozen}, we employ a duality argument, which leads to the
%Thanks to the theory of backward stochastic differential equations (BSDE) we can apply the Holmgren duality principle to the stochastic equation \eqref{target equation}.
%We will use the following 
backward stochastic PDE
\begin{equation}
\label{introduction:bspde}
\partial_t f 
= - a^{i,j} \partial_{i,j}^2 f
- b^i \partial_{i} f 
- \sigma^{i,k} \partial_{i} v^k
+ v^k dW^k, \quad
f_T = \varphi,
\end{equation}
where the terminal condition is a sufficiently smooth random test function. We emphasize that in the case of stochastic scalar conservation laws \eqref{SSCL}, and equivalently \eqref{eqn:FP_from_SCL} in It\^{o} form, we have 
\begin{equation*}
	a^{i,j}(u) = \frac12 \sigma^{i,k}(u)\sigma^{k,j}(u),
\end{equation*} 
which implies that \eqref{introduction:bspde} is degenerate. For background on degenerate backward stochastic PDE we refer to \cite{MR1151549, MR3017034, MR2158497, MR1918539}. In order to invoke the duality argument for measure-valued solutions, we require classical solutions to \eqref{introduction:bspde} which can be obtained based on \cite{MR3017034} by Du, Tang, Zhang, and Sobolev embedding. It then follows (cf.\ Lemma \ref{duality lemma} below) that
\begin{equation}
	\E\langle \mu_t, \varphi \rangle = \E\langle \mu_t, f_t \rangle = \E\langle \mu_0, f_0 \rangle,
\end{equation}
which implies the uniqueness of measure-valued solutions to \eqref{target equation_frozen}. We also refer to Zhou \cite{MR1151549} and Diehl, Friz, Stannat \cite{DFS17} for results on the duality of stochastic PDE and backward stochastic PDE.

\subsection*{Structure of the paper}
In Section \ref{section: preliminaries} we will set the notation. In Section \ref{section: McKean} we analyze the Lagrangian dynamics. Section \ref{section:lienear spde} is devoted to the proof of well-posedness of linear SPDEs using Holmgren principle. In Section \ref{section: main results} we prove well-posedness for the non-local SPDE \eqref{target equation}.

\section{Notations and assumptions}
\label{section: preliminaries}

We fix two numbers $d, d_1 \in \mathbb{N}$. We will use the following notational conventions for the indices: $i,j \in \{1, \dots, d\}$ and $k\in \{1, \dots, d_1\}$.

For any multi-index $\alpha = (\alpha_1, \dots, \alpha_d)$, we set
\begin{equation*}
	D^{\alpha} = \left( \frac{\partial}{\partial x_1}\right)^{\alpha_1} \left( \frac{\partial}{\partial x_2}\right)^{\alpha_2} \cdots \left( \frac{\partial}{\partial x_d}\right)^{\alpha_d}
\end{equation*}
and $\vert \alpha \vert = \alpha_1 + \cdots + \alpha_d$.

Let $C_c^\infty$ and $C^n$ be the set of infinitely differentiable differentiable real-valued functions of compact support defined on $\mathbb{R}^d$ and the set of $n$ times continuously differentiable functions on $\mathbb{R}^d$ such that
\begin{equation*}
	\Vert \varphi \Vert_{C^n} := \sum_{\vert \alpha \vert \leq n} \sup_{x\in\mathbb{R}^d} \vert D^\alpha \varphi \vert < +\infty.
\end{equation*}

Let $\operatorname{Lip}_1$ be the space of Lipschitz continuous functions in $C^0$, such that
\begin{equation*}
	\Vert \varphi \Vert_{C^0}, \sup_{x \neq y \in \mathbb{R}^d} \frac{\vert \varphi(x) - \varphi(y) \vert}{\vert x - y \vert} \leq 1.
\end{equation*}

For a function $f \in C([0,T]; \mathbb{R}^d)$ we call $\Vert f \Vert_{\infty}$ its supremum norm.

For $p>1$ and an integer $m\geq 0$, we let $W^{m, p} = W^{m, p}(\mathbb{R}^d; \mathbb{R})$ be the Sobolev space of real-valued functions on $\mathbb{R}^d$ with finite norm
\begin{equation*}
	\Vert f\Vert_{W^{m, p} }:= \left( \sum_{\vert \alpha \vert \leq n} \int_{\mathbb{R}^d} \vert D^{\alpha}f(x) \vert^p dx \right)^\frac{1}{p} < \infty.
\end{equation*}
In the same way, denote by $W^{m,p}(\mathbb{R}^{d_1}) = W^{m, p}(\mathbb{R}^d;\mathbb{R}^{d_1})$ the Sobolev space of $d_1$-dimensional vector-valued functions on $\mathbb{R}^d$, equipped with the norm 
\begin{equation*}
	\Vert v \Vert_{W^{m,p}} := \left( \sum_{k=1}^{d_1}\Vert v^k \Vert_{W^{m,p}}^p  \right)^{\frac1p} <\infty.
\end{equation*}

We call $\mathcal{M}^{\pm}(\mathbb{R}^d)$ the space of finite signed measures on $\mathbb{R}^d$. On this space we define the total variation norm
\begin{equation*}
\Vert \mu \Vert_{TV} = \sup_{\Vert \varphi \Vert_{C^0} \leq 1} \langle \mu, \varphi \rangle.
\end{equation*}
For $r>0$, we define $\mathcal{M}^{\pm}_{\leq r} := \{ \mu \in \mathcal{M}^{\pm}(\mathbb{R}^d) \mid \Vert \mu \Vert_{TV} \leq r \}$.

We call $\mathcal{M}(\mathbb{R}^d) \subset \mathcal{M}^{\pm}(\mathbb{R}^d)$ (resp. $\mathcal{P}(\mathbb{R}^d)$) the space of finite positive (resp. probability) measures on $\mathbb{R}^d$.

For $r>0$, we call $\mathcal{M}_r(\mathbb{R}^d)$ the space of measures in $\mathcal{M}(\mathbb{R}^d)$ with total variation equal to $r$, namely
\begin{equation*}
\mathcal{M}_r(\mathbb{R}^d) =
\left\{
\mu \in \mathcal{M}(\mathbb{R}^d) \mid \Vert \mu \Vert_{TV} = r 
\right\}.
\end{equation*}
It is worth mentioning that $\mathcal{M}_1(\mathbb{R}^d) = \mathcal{P}(\mathcal{R}^d)$. We endow $\mathcal{M}_r(\mathbb{R}^d)$ with the Kantorovic-Rubinstein norm
\begin{equation*}
\|\mu\| := \sup_{\varphi \in \operatorname{Lip}_1} \left(\int_{\mathbb{R}^d} \varphi d\mu\right),
\quad \forall \nu \in \mathcal{M}_r(\mathbb{R}^d)
\end{equation*}
and let $\rho$ be the induced metric. On $\mathcal{M}_r$ we consider the Borel $\sigma$-algebra induced by $\rho$.

From \cite[Theorem 8.3.2]{MR2267655}, we have that the metric $\rho$ metrizes the weak convergence of measures. Moreover, the space $(\mathcal{M}(\mathbb{R}^d), \rho)$ is complete and separable, see \cite[Theorem 8.9.4]{MR2267655}.

On a filtered probability space $(\Omega, \mathcal{F}, (\mathcal{F}_t)_{t\geq 0}, \mathbb{P})$, let $W$ be a $d_1$-dimensional Brownian motion. Let $(\mathcal{W}^s_t)_{t\geq s}$ be the completion of the filtration generated by the increments of $W$ starting from $s$, namely the completion of
\begin{equation*}
\mathcal{W}^s_t := \sigma( W_r - W_s \; : \; s\leq r\leq t), \quad \forall s\leq t \in [0,\infty).
\end{equation*}
To simplify the notation, we omit the dependence from the starting time when it is zero, i.e. $\mathcal{W}_t := \mathcal{W}_t^0$. Moreover, we set $\mathcal{W} : = \vee_{t\geq0} \mathcal{W}_t$. It follows from the independence of the increments of the Brownian motion, that $\mathcal{W} = \mathcal{W}_t \vee( \vee_{T\geq t}\mathcal{W}_T^t)$, for all $t \in [0, T]$. Here with $\mathcal{F}\vee \mathcal{G}$ we indicate the $\sigma$-algebra generated by the union of the two $\sigma$-algebras.
\begin{defn}
	\label{defn:filtration}
	We say that the filtration $(\mathcal{F}_t)_{t\geq 0}$ is compatible with the Brownian motion $W$, if $W$ is $(\mathcal{F}_t)_{t\geq 0}$-adapted and if there exists a complete filtration $(\mathcal{G}_t)_{t\geq 0}$, such that, for every $t\geq 0$, $\mathcal{G}_t$ is independent from $\mathcal{W}_t$ and
	\begin{equation}
	\mathcal{F}_t = \mathcal{W}_t \vee \mathcal{G}_t.
	\end{equation} 
\end{defn} 

\begin{rem}
Given an $(\mathcal{F}_t)_{t\geq 0}$-adapted stochastic process $X:[0,T] \times \Omega \to \mathbb{R}^d$, we have
\begin{equation}\label{conditional equivalence}
\mathbb{E}\left[ X_t \mid \mathcal{W} \right] = \mathbb{E}\left[ X_t \mid \mathcal{W}_t \right], \quad \mathbb{P}-a.s.
\end{equation}
This is a consequence of Lemma \ref{appendix lemma:conditioning}.
\end{rem}

Moreover, we call $\mathcal{L}(X \mid \mathcal{W})$ the family $(\mathcal{L}(X_t \mid \mathcal{W}))_{t\geq 0} \subset \mathcal{P}(\mathbb{R}^d)$, such that for every $t\in[0,T]$ and $\varphi \in C^0$, we have
\begin{equation}
\label{eq:defn conditional law}
\langle \mathcal{L}(X_t \mid \mathcal{W}), \varphi \rangle := \mathbb{E}\left[\varphi(X_t) \mid \mathcal{W} \right] = \mathbb{E}\left[\varphi(X_t) \mid \mathcal{W}_t \right], \quad \mathbb{P}-a.s.
\end{equation}

\begin{rem}
    In the following we always assume that the filtration $(\mathcal{F}_t)_{t\geq 0}$ is compatible with the Brownian motion $W$ in the sense of Definition \ref{defn:filtration}. The leading example is $\mathcal{F}_t$ being the sigma algebra $\mathcal{W}_t \vee \mathcal{B}_t \vee \mathcal{H}$, where $(\mathcal{B}_t)_{t\geq 0}$ is the completion of the filtration of a Brownian motion $B$, independent from $W$, and $\mathcal{H}$ is the $\sigma$-algebra generated by the initial condition. Equality \eqref{conditional equivalence} is still true in this case.
	
	Moreover, we will always assume that the space $(\Omega, \mathcal{F}_0, \mathbb{R})$ is atomless. This implies that, given a metric space $E$ and a probability $\mu \in \mathcal{P}(E)$, we can always construct a random variable $X:\Omega \to E$, with $X=\mu$. See \cite[Proposition 9.1.11]{bogachev2007measure}
\end{rem}

Denote by $L^1_{\omega, t}\mathcal{M}_r$ the space of $(\mathcal{W}_t)_{t\geq 0}$-adapted, $(\mathcal{B}(\mathbb{R}_{+}) \times \mathcal{F})$-measurable processes $\mu : [0, T] \times \Omega \to  \mathcal{M}_r(\mathbb{R}^d)$ such that
\begin{equation*}
	\mathbb{E}\left[\int_{0}^{T}\|\mu_t\| dt \right] < +\infty.
\end{equation*}
\begin{rem}
	The space $L^1_{\omega, t}\mathcal{M}_r$ is complete. Indeed: Given a Cauchy sequence $(\mu^n)_{n\in \mathbb{N}} \subset L^1_{\omega, t}\mathcal{M}_r$ there is a subsequence $(\mu^{n_k})_{k\in \mathbb{N}}$ which is almost surely a Cauchy sequence in $(\mathcal{M}_r, \|\cdot\|)$. Since $\mathcal{M}_r$ is complete, there exists a null set $N\subset \Omega$, such that, for all $\omega \in N^c$, there exists $\mu(\omega) \in \mathcal{M}_r$ such that $\|\mu^{n_k}(\omega)-\mu(\omega)\| \rightarrow 0$ as $k\to \infty$. Adaptedness and joint measurability of $\mu$ follows from the respective properties of  $\mu^{n_k}$. Since the norm $\|\cdot\|$ is bounded, dominated convergence concludes the argument. 
\end{rem}
Denote by $L^1_{\omega}C_t\mathcal{M}_r$ the space of $(\mathcal{W}_t)_{t\geq 0}$-adapted continuous processes $\mu : [0, T] \times \Omega \to  \mathcal{M}_r(\mathbb{R}^d)$ such that there exists $\nu \in \mathcal{M}_r$ that satisfies
\begin{equation*}
\mathbb{E}\left[\sup_{t\in[0,T]}\|\mu_t\|  \right] < +\infty.
\end{equation*}

Notice that $L^1_{\omega}C_t\mathcal{M}_r \subset L^1_{\omega, t}\mathcal{M}_r$.

For $p\geq 1$, denote by $L^p_{t,\mathcal{F}}$ the space of $p$-integrable, $(\mathcal{F}_t)_{t\geq0}$-adapted stochastic processes on $\mathbb{R}^d$. We denote by $C_tL_{\omega}^1 := C([0,T]; L^1(\Omega, \mathbb{R}^d))$.

\section{McKean-Vlasov stochastic differential equation}
\label{section: McKean}
In this section we discuss the well-posedness of a McKean-Vlasov SDE. Let $a$, $b$ and $\sigma$ be measurable functions as in the introduction. Throughout this section the following assumptions are in force.

\begin{asm}
	\label{assumptions:particles}
	There is an $r>0$ such that
	\begin{enumerate}[label=(\roman*), ref=\ref{assumptions:particles} (\roman*)]
		%[label=\ref{assumptions:particles}.\roman*)]%[label=\textbf{A.\arabic*}]

		\item \label{assumptions:Lipschitz} (Uniform Lipschitz continuity) There exists a constant $K > 0$ such that
		\begin{align*}
		\Vert & a(t,x,\mu) - a(t,x^\prime,\mu^\prime) \Vert 
		+ \Vert \sigma(t,x,\mu) - \sigma(t,x^\prime,\mu^\prime) \Vert
		+ \vert b(t,x,\mu) - b(t,x^\prime,\mu^\prime) \vert \\
		& \leq K\left(  \vert x - x^\prime \vert + \rho(\mu, \mu^\prime)\right),
		\end{align*}
		
		for all $\mu, \mu^\prime \in \mathcal{M}_r(\mathbb{R}^d)$, $t\in [0,T]$ and $x, x^\prime \in \mathbb{R}^d$.
		
		\item \label{assumptions:bounded} (Uniform boundedness) There exists a constant $K > 0$ such that
		\begin{align*}
		\Vert & a(t,x,\mu)\Vert 
		+ \Vert \sigma(t,x,\mu) \Vert
		+ \vert b(t,x,\mu) \vert 
		\leq K,
		\end{align*}
		for all $\mu \in \mathcal{M}_r(\mathbb{R}^d)$, $t\in [0,T]$ and $x \in \mathbb{R}^d$.
		
		\item \label{assumptions:parabolicity} (Parabolicity) For each $(t,x,\mu) \in [0, T] \times \mathbb{R}^d \times \mathcal{M}_r(\mathbb{R}^d) $,
		\begin{equation*}
		[2a^{ij}(t,x,\mu) - \sigma^{ik}\sigma^{jk}(t,x,\mu)] \xi^i\xi^j \geq 0, \quad \forall \xi \in \mathbb{R}^d.
		\end{equation*}
	\end{enumerate}

\end{asm}

From now on assume that Assumption \ref{assumptions:particles} is satisfied. Let $(\Omega, \mathcal{F}, (\mathcal{F}_t)_{t\geq 0}, \mathbb{P})$ be a filtered probability space and $W$ a $d_1$-dimensional Brownian motion on this space, which is compatible with $(\mathcal{F}_t)_{t\geq0}$ in the sense of Definition \ref{defn:filtration}. Let $B$ be a $d$-dimensional $(\mathcal{F}_t)_{t\geq0}$-adapted Brownian motion independent of $W$. Moreover, assume that $X_0: \Omega \to \mathbb{R}^d$ is an $\mathcal{F}_0$-measurable random variable. Notice that $X_0$ is independent of $W$ and $B$.

We set
\begin{equation}
\label{definition alpha}
\alpha(t,x,\mu) := \left[2a(t,x,\mu) - \sigma^T(t,x,\mu)\sigma(t,x,\mu)\right]^{\frac{1}{2}}
	\quad \forall t \in[0,T], \; x\in \mathbb{R}^d ,  \; \mu\in \mathcal{M}(\mathbb{R}^d).
\end{equation}
It follows from Assumption \ref{assumptions:parabolicity} that $\alpha(t, x, \mu)$ is well defined as a symmetric matrix. Moreover, $\alpha$ is Lipschitz continuous and bounded in its variables $\mu$ and $x$, namely, there exists a constant $K > 0$, possibly different than before, such that for all $t \in[0,T]$, $x,x^\prime \in \mathbb{R}^d$ and $\mu, \mu^\prime\in \mathcal{M}_r(\mathbb{R}^d)$, 
\begin{equation*}
\Vert  \alpha(t,x,\mu) - \alpha(t,x^\prime,\mu^\prime) \Vert 
\leq K\left(  \vert x - x^\prime \vert + \rho(\mu, \mu^\prime)\right).
\end{equation*}
We consider the McKean-Vlasov SDE
\begin{equation}\label{sde system}
\left\{
\begin{array}{l}
dX_t = b(t,X_t,\mu_t)dt + \sigma(t,X_t,\mu_t) dW_t + \alpha(t,X_t,\mu_t)dB_t,\\
X_t|_{t=0} = X_0,\\
\mu_t  := r\mathcal{L}(X_t \mid \mathcal{W}).
\end{array}
\right.
\end{equation}

%Note that, when $r = 1$, then $\mu = \mathcal{L}(X_t \mid \mathcal{W})$.

\begin{defn}
	\label{definition solution sde}
	Let $X_0 : \Omega \rightarrow \mathbb{R}^d$, be $\mathcal{F}_0$-measurable, $r>0$ and define $\mu_0 := r\mathcal{L}(X_0)$. We say that a stochastic process $(X, \mu) : [0,T] \times \Omega \rightarrow \mathbb{R}^d \times \mathcal{M}_r$ is a solution to the McKean-Vlasov equation \eqref{sde system} with initial condition $X_0$, if
	\begin{enumerate}
		\item $X$ is $(\mathcal{F}_t)_{t\in[0,T]}$-adapted and time-continuous.
		\item $\mu \in L^1_{\omega, t}\mathcal{M}_r$ and for all $t\in [0,T]$,
		\begin{equation*}
		\mu_t  = r\mathcal{L}(X_t \mid \mathcal{W}),
		\quad \mathbb{P}-a.s.
		\end{equation*}
		\item The following integral equation is satisfied, namely, $\forall t \in [0, T]$,
		\begin{equation*}
		X_t = X_0 + \int_{0}^{t}b(s,X_s,\mu_s)ds + \int_{0}^{t}\sigma(s,X_s,\mu_s) dW_s + \int_{0}^{t}\alpha(s,X_s,\mu_s)dB_s, \quad \mathbb{P}-a.s.
		\end{equation*}
%		\item we have
%		\begin{equation*}
%		\forall \varphi \in C^0(\mathbb{R}^d), 
%		\quad \forall t \in [0,T], 
%		\quad \langle \mu_t, \varphi \rangle = r\mathbb{E}[\varphi(X_t)\mid \mathcal{W}],
%		\quad \mathbb{P}-a.s.
%		\end{equation*}
	\end{enumerate}

\end{defn}
We obtain the following well-posedness result for equation \eqref{sde system}. 

\begin{theorem}\label{wellposedness sde}
	Fix $r>0$ and assume Assumption \ref{assumptions:particles}. Let $X_0 : \Omega \rightarrow \mathbb{R}^d$, be $\mathcal{F}_0$-measurable.
	Then, there exists a unique solution $(X, \mu)$ to equation \eqref{sde system} in the sense of Definition \ref{definition solution sde}. In addition, the solution satisfies $\mu \in L^1_{\omega}C_t\mathcal{M}_r$.
	
%	Moreover, if $(X^1, \mu^1), (X^2, \mu^2)$ are two solutions to \eqref{sde system} associated with initial conditions $X_0^1$ and $X_0^2$ respectively, then there exists a constant $C := C(T) > 0$, such that
%		\begin{equation*}
%		\mathbb{E}\sup_{t\in[0,T]}\rho(\mu^1_t, \mu^2_t) \leq C \rho(\mu^1_0, \mu^2_0),
%		\end{equation*}
%		with $\mu^i_0  = r\mathcal{L}(X^i_0)$, for $i=1,2$.
\end{theorem}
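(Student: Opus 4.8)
The plan is to solve \eqref{sde system} by a Picard iteration in which the fixed-point variable is the conditional-law process $\mu$, while the quantitative estimates are carried out in $L^2(\Omega)$ on the path process $X$. Concretely, for a fixed adapted measure process $\mu\in L^1_{\omega,t}\mathcal{M}_r$ I would freeze the coefficients and consider the linear SDE
\[
dX^\mu_t = b(t,X^\mu_t,\mu_t)\,dt + \sigma(t,X^\mu_t,\mu_t)\,dW_t + \alpha(t,X^\mu_t,\mu_t)\,dB_t,\qquad X^\mu_0=X_0.
\]
By Assumptions \ref{assumptions:Lipschitz}--\ref{assumptions:bounded}, and the Lipschitz bound on $\alpha$ from \eqref{definition alpha}, the maps $x\mapsto(b,\sigma,\alpha)(t,x,\mu_t)$ are, for fixed $\mu$, progressively measurable, bounded and globally Lipschitz in $x$, so this equation has a unique strong solution for the given $\mathcal{F}_0$-measurable datum $X_0$. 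I then set $\Phi(\mu)_t:=r\,\mathcal{L}(X^\mu_t\mid\mathcal{W})$. This lands in $\mathcal{M}_r$ because conditional laws are probability measures, and it is $(\mathcal{W}_t)$-adapted by \eqref{conditional equivalence}, which identifies $\mathcal{L}(X^\mu_t\mid\mathcal{W})=\mathcal{L}(X^\mu_t\mid\mathcal{W}_t)$; a solution to \eqref{sde system} in the sense of Definition \ref{definition solution sde} is precisely a fixed point of $\Phi$.

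The heart of the matter is a stability estimate that is \emph{self-consistent in $L^2$} despite the conditional-law (common-noise) structure. Given $\mu,\nu\in L^1_{\omega,t}\mathcal{M}_r$, testing the two conditional laws against an arbitrary $\varphi\in\operatorname{Lip}_1$ and using $|\varphi(X^\mu_t)-\varphi(X^\nu_t)|\le|X^\mu_t-X^\nu_t|$ yields the pointwise bound
\[
\rho\big(\Phi(\mu)_t,\Phi(\nu)_t\big)\le r\,\mathbb{E}\big[\,|X^\mu_t-X^\nu_t|\;\big|\;\mathcal{W}\big],
\]
so that, by Jensen's inequality, $\mathbb{E}\,\rho(\Phi(\mu)_t,\Phi(\nu)_t)^2\le r^2\,\mathbb{E}\,|X^\mu_t-X^\nu_t|^2$. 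Crucially, since $X^\mu$ and $X^\nu$ start from the \emph{same} $X_0$, the initial datum cancels in the difference, and the bounded drift together with the Burkholder--Davis--Gundy inequality applied to the bounded diffusion terms already give $\mathbb{E}\,|X^\mu_t-X^\nu_t|^2<\infty$ \emph{without any moment assumption on $X_0$}. A standard BDG--Gronwall computation, using the Lipschitz Assumption \ref{assumptions:Lipschitz} for $b,\sigma,\alpha$, then gives
\[
\mathbb{E}\sup_{s\le t}|X^\mu_s-X^\nu_s|^2\le C\int_0^t\mathbb{E}\,\rho(\mu_s,\nu_s)^2\,ds.
\]

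With these two ingredients I would run the scheme $\mu^{0}_t\equiv r\mathcal{L}(X_0)$, $\mu^{n+1}:=\Phi(\mu^{n})$, set $v_n(t):=\mathbb{E}\sup_{s\le t}|X^{\mu^{n}}_s-X^{\mu^{n-1}}_s|^2$, and combine the previous two displays into $v_{n+1}(t)\le \tilde C\int_0^t v_n(s)\,ds$; iterating yields $v_{n+1}(T)\le \tilde C^{\,n} T^{\,n}/n!\cdot v_1(T)$ with $v_1(T)<\infty$, so $(X^{\mu^n})$ is Cauchy in $L^2(\Omega;C([0,T];\mathbb{R}^d))$ and, via the same conditional-Wasserstein bound, $(\mu^n)$ is Cauchy, hence convergent, in the complete space $L^1_{\omega,t}\mathcal{M}_r$. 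Passing to the limit in the SDE using the Lipschitz control of the coefficients identifies the limit $(X,\mu)$ as a solution. To upgrade to $\mu\in L^1_\omega C_t\mathcal{M}_r$ I would note that $\|\mu_t\|\equiv r$ renders the integrability automatic, while time-continuity of $t\mapsto\mu_t$ in $\rho$ follows from $\rho(\mu_t,\mu_s)\le r\,\mathbb{E}[|X_t-X_s|\mid\mathcal{W}]$ and the modulus of continuity of the a.s. continuous paths of $X$, uniform in $\omega$ by boundedness of the coefficients. Uniqueness follows by the identical estimate for two solutions sharing $X_0$: the inequality $\mathbb{E}\,\rho(\mu_t,\mu'_t)^2\le r^2\mathbb{E}\,|X_t-X'_t|^2$ closes the Gronwall loop and forces $X=X'$, hence $\mu=\mu'$.

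The step I expect to require the most care is precisely the passage from the common-noise structure to a \emph{closed} $L^2$ estimate: one must justify that $\mu_t=r\mathcal{L}(X_t\mid\mathcal{W})$ is a well-defined, $(\mathcal{W}_t)$-adapted, jointly measurable $\mathcal{M}_r$-valued process (regular conditional distributions on the Polish space $\mathbb{R}^d$, together with \eqref{conditional equivalence}), and that the conditional-Wasserstein inequality is compatible with conditioning on $\mathcal{W}$ rather than on the full filtration. Everything else is a careful adaptation of the classical Sznitman fixed-point argument (cf.\ \cite{sznitman1991topics}), the single genuinely nonstandard feature being the absence of any moment bound on $X_0$, which is circumvented by always estimating differences of solutions driven by the \emph{same} initial condition so that $X_0$ cancels before any integrability is invoked.
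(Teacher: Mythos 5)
Your proposal is correct and follows essentially the same route as the paper: both freeze the measure argument, solve the resulting linear SDE, define $\Phi(\mu)=r\,\mathcal{L}(X^\mu\mid\mathcal{W})$, bound the conditional Kantorovich--Rubinstein distance by $r\,\mathbb{E}\vert X^\mu_t-X^\nu_t\vert$, and close a Gronwall-plus-factorial iteration to obtain a contraction (the paper works with the $L^1$-in-time metric $d(\mu,\nu)=\mathbb{E}\int_0^T\rho(\mu_t,\nu_t)\,dt$ and takes the fixed point of the iterate $\Phi^k$, while you run an explicit $L^2$ Picard scheme --- a cosmetic difference, since $\rho$ is bounded on $\mathcal{M}_r$). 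The one imprecise step is your claim that time-continuity of $t\mapsto\mu_t$ follows from an $\omega$-uniform modulus of continuity of $X$ (bounded coefficients do not give a deterministic modulus for the stochastic integral); the paper instead derives $\mathbb{E}\,\rho(\Phi(\mu)_t,\Phi(\mu)_s)^p\le r\,\mathbb{E}\vert X^\mu_t-X^\mu_s\vert^p$ and applies Kolmogorov's continuity criterion to the measure-valued process, which is the standard fix.
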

\begin{proof}
	This well-posedness result is a direct consequence of \cite[Theorem 2.3]{kurtz1999particle}. However, we provide an alternate proof here avoiding the infinite interacting particle system used in \cite{kurtz1999particle}, but rather studying equation \eqref{sde system} directly.
	To prove the existence of a solution we start with a stochastic process $\mu \in L^1_{\omega,t}\mathcal{M}_r$ and we freeze the coefficients in \eqref{sde system}, to obtain the following equation
\begin{equation}
\label{linear sde}
\left\{
\begin{array}{l}
dX^{\mu}_t = b(t, X_t^\mu, \mu_t)dt + \sigma(t, X_t^\mu, \mu_t) dW_t + \alpha(t, X_t^\mu, \mu_t)dB_t\\
X^\mu_0 = X_0. \\
\end{array}
\right.
\end{equation}

	The coefficients $b(t, x, \mu_t), \sigma(t, x, \mu_t), \alpha(t, x, \mu_t)$ are progressively measurable, Lipschitz continuous and bounded. Hence, there exists a unique time-continuous $(\mathcal{F}_t)_{t\geq 0}$-adapted solution $(X^\mu_t)_{t\in [0,T]}$ to equation \eqref{linear sde}, see \cite[Theorem 3.1.1]{MR3410409}.
	%By using the Burkholder-Davis-Gundy inequality and the boundedness of the coefficents, we can conclude that $X^\mu \in L^1_{\omega}C_t$.

We define the following operator
\begin{equation}
\label{defn:contraction operator}
\begin{array}{cccc}
\Phi : & L^1_{\omega,t}\mathcal{M}_r & \to & %L^1_{\omega}C_t\mathcal{M}_r \subset
L^1_{\omega,t}\mathcal{M}_r\\
& \mu & \mapsto & r \mathcal{L}(X^\mu \mid \mathcal{W})
\end{array}
\end{equation}
and we will prove that its iterates $\Phi^k$ for $k$ large enough are contractions with respect to the metric
\begin{equation*}
	d(\mu, \nu) : =
	\mathbb{E}\left[\int_0^T\rho(\mu_t, \nu_t)dt\right], \quad \forall \mu, \nu \in L^1_{\omega,t}\mathcal{M}_r.
\end{equation*}
%First we must show that $\Phi$ is indeed well-defined. Let $r\delta_0 \in \mathcal{M}_r$ be the Dirac measure in zero multiplied by $r$. We compute the following, $\forall t\in [0,T], \mu \in L^1_{\omega,t}\mathcal{M}_r$,
%\begin{equation*}
%\rho(\Phi(\mu)_t, r\delta_0) 
%= \sup_{\varphi \in \operatorname{Lip}_1}\vert \mathbb{E}\left[ r\varphi(X_t^\mu) - r\varphi(0)\mid \mathcal{W} \right] \vert
%\leq r\mathbb{E}\left[\vert X^\mu_t \vert \mid \mathcal{W} \right],
%\quad \mathbb{P}-a.s.
%\end{equation*}
%which implies
%\begin{equation*}
%	\mathbb{E}\essup_{t\in[0,T]} \rho(\Phi(\mu)_t, r\delta_0) 
%	\leq \mathbb{E}\essup_{t\in[0,T]}r\mathbb{E}\left[\vert X^\mu_t \vert \mid \mathcal{W} \right]
%	\leq r\mathbb{E}\Vert X^\mu \Vert_{\infty}
%	< +\infty,
%\end{equation*}
%where the last inequality follows because $X_t^\mu \in L^1_{\omega}C_t$. 
Let $s,t \in [0,T]$, $p \in [1, \infty)$, and $\mu \in L^1_{\omega,t}\mathcal{M}_r$, we have
%using dominated convergence for conditional expectations, we have for $t_n \to t$ as $n\to\infty$,
\begin{equation*}
\mathbb{E}\rho(\Phi(\mu)_t, \Phi(\mu)_{s})^p \leq r \mathbb{E}\vert X^\mu_t - X^\mu_{s} \vert^p.
\end{equation*}
Standard estimates on the solutions of SDEs and Kolmogorov's continuity theorem imply that the process $\Phi(\mu)$ has a modification which is time continuous with respect to the weak topology, which is induced by $\rho$. Hence, $\Phi(\mu) \in L^1_{\omega}C_t\mathcal{M}_r \subset L^1_{\omega,t}\mathcal{M}_r$.

We proceed by proving that $\Phi$ is a contraction on $(L^1_{\omega,t}\mathcal{M}_r, d) $. By the definition of the Kantorovich-Rubinstein metric and using the conditional Jensen inequality, we have for each $t \in [0,T]$,
\begin{align*}
	\mathbb{E}\int_{0}^t\rho(\Phi(\mu)_s, \Phi(\nu)_s)ds
	= &\mathbb{E}\left[\int_{0}^t\sup_{\varphi \in \operatorname{Lip}_1}\mathbb{E}\left[r\varphi(X^{\mu}_s) - r\varphi(X^{\nu}_s) \mid \mathcal{W} \right] ds\right]\\
	\leq &r\int_{0}^t\mathbb{E}\vert X_s^{\mu} - X_s^{\nu} \vert ds,
	\quad \forall \mu, \nu \in L^1_{\omega, t}\mathcal{M}_r.
\end{align*}
Using standard estimates for the solutions of SDEs, Lemma \ref{lemma:conditional integrals}, the Burkholder-Davis-Gundy inequality, Assumption \ref{assumptions:Lipschitz} and Gronwall's Lemma, we have
\begin{equation*}
	\mathbb{E}\vert X^{\mu}_t - X^{\nu}_t \vert
	\leq e^{Ct} \int_{0}^t\mathbb{E}\rho(\mu_s, \nu_s)ds.
\end{equation*}
Hence,
\begin{equation*}
\mathbb{E}\int_{0}^t\rho(\Phi(\mu)_s, \Phi(\nu)_s)ds
\leq
r e^{CT}\int_{0}^t\int_{0}^s\mathbb{E}\rho(\Phi(\mu)_r, \Phi(\nu)_r)drds,
\end{equation*}
where the constant $C > 0$ depends only on $r$ and $K$ as given in Assumption \ref{assumptions:particles}.
Iterating the operator $\Phi$ $k$-times, yields the following inequality
\begin{align*}
\mathbb{E}\int_{0}^T \rho(\Phi^k(\mu)_t, \Phi^k(\nu)_t) dt
\leq & r\int_{0}^T\mathbb{E}\vert X^{\Phi^{k-1}(\mu)}_t - X^{\Phi^{k-1}(\nu)}_t \vert dt \\
\leq & r^k e^{kCT} \int_{0}^T \dots \int_{0}^{t_{k-1}}\mathbb{E}\rho(\mu_{t_k}, \nu_{t_k})dt_k \cdots dt_{1}\\
\leq & \frac{r^k e^{kCT}}{(k-1)!} \int_{0}^T\mathbb{E}\rho(\mu_t, \nu_t)dt.
\end{align*}
If $k$ is large enough, the coefficient $r^ke^{kCT} / (k-1)!$ is less then one. Hence, $\Phi^k$ is a contraction on $L^1_{\omega, t}\mathcal{M}_r$ and thus has a unique fixed point. This fixed point is also the unique fixed point of $\Phi$, see \cite[Prop 2.3]{coghi2018pathwise}. Since solutions to \eqref{sde system} are precisely the fixed points of $\Phi$, this yields the existence and uniqueness of solutions to the McKean-Vlasov equation \eqref{sde system}.
%We turn now to the proof of stability. Let $X_0^1, X_0^2 \in L^1(\Omega, \mathcal{F}_0;\mathbb{R}^d)$ be two initial conditions and $(X^1, \mu^1), (X^2, \mu^2)$ the associated solutions. Following the same reasoning as in the proof of well-posedness we obtain
%\begin{align*}
%	\mathbb{E}\left[\sup_{t\in[0,T]}\rho(\mu^1_t, \mu^2_t) \right] 
%	\leq & r \mathbb{E}\left[\sup_{t\in[0,T]}\sup_{\varphi \in \operatorname{Lip}_1}\mathbb{E}\left[\varphi(X^1_t) - \varphi(X^2_t) \mid \mathcal{W} \right]\right]\\
%	\leq & C\left( \mathbb{E}\vert X^1_0 - X^2_0 \vert + \int_{0}^{T}  \mathbb{E}\left[\sup_{s\in[0,t]}\rho(\mu^1_s, \mu^2_s)\right]dt \right),
%\end{align*}
%where $C>0$ is possibly different than before. A standard application of Gronwall's lemma is enough to conclude.
\end{proof}

\begin{rem}
	\label{rem:law indep initial condition}
	We note that, under more restrictive assumptions on the coefficients, the conditional law $\mu = \mathcal{L}(X \mid \mathcal{W})$ of a solution $X$ to \eqref{sde system} does not depend on $X_0$ but only on $\mu_0 := \mathcal{L}(X_0)$. This follows from the results proved later in Section \ref{section: main results}. Indeed,	
	%	and let $(X, \mu)$ be the solution to equation \ref{sde system} with initial condition $X_0$.
	%	Theorem \ref{thm:existence spde} shows that $\mu$ solves the nonlinear equation \ref{target equation}, which implies that $\mu$ solves the linear SPDE \ref{linear ito spde} with initial condition $\mu_0$ and coefficients given by plugging $\mu$ into $b, a$ and $\sigma$.
	%
	assuming Assumption \ref{assumptions:nonlinear} with $m >\frac{d}{2} + 2$, Theorem \ref{thm:existence spde}
	implies that $\mu$ is a solution to equation \eqref{target equation} in the sense of Definition \ref{definition of solution}. Thanks to Theorem \ref{thm:uniqueness spde}, this solution is unique, given the initial law $\mu_0$. This implies that $\mu$ only depends on $X_0$ via its law $\mu_0 = \mathcal{L}(X_0)$.
\end{rem}

\subsection{Remarks on the associated interacting particle system}
\label{section:interacting particles}

Let $(\Omega, \mathcal{F}, (\mathcal{F}_t)_{t\geq 0},\mathbb{P})$ be a filtered probability space. Let $W$ be an $(\mathcal{F}_t)_{t\geq 0}$-compatible Brownian motion and $(X^i_0)_{i\geq 0}$ be a sequence of independent and identically distributed (IID) random variables in $L^2(\Omega, \mathcal{F}_0;\mathbb{R}^d)$ with law $\mu_0$. Moreover, consider a sequence of independent $(\mathcal{F}_t)_{t\geq 0}$-adapted Brownian motions $(B^i_t)_{t\geq 0}$, which are jointly independent of $W$ and $(X^i_0)_{i\geq 0}$. 

Consider the following system of interacting particles on $\mathbb{R}^d$, $\forall t \in [0,T]$, $i = 1, \dots, N$,
\begin{equation*}
\left\{
\begin{array}{l}
dX^{i,N}_t = b(t, X^{i,N}_t, L^N_t)dt + \sigma(t, X^{i,N}_t, L^N_t)dW_t + \alpha(t, X^{i,N}_t, L^N_t)dB^i_t\\
X^{i,N}_0 = X_0^i,
\end{array}
\right.
\end{equation*}
where $L^N_t := \frac{1}{N}\sum_{i=1}^{N}\delta_{X_t^{i,N}}$ is the empirical measure of the system.

In this section, we work under the following additional assumption.
\begin{asm}
	\label{assumptions:extra particles}
	There is a constant $K>0$ such that, for any IID sequence of random variables $(X^i)_{i\geq0}$ on $\mathbb{R}^d$, with law $\mu$, the following holds, for every $x\in \mathbb{R}^d$,
	\begin{equation*}
	\mathbb{E}\left|\sigma(x, L^N) - \sigma(x, \mu)\right|^2
	+\mathbb{E}\left|b(x, L^N) - b(x, \mu)\right|^2
	+\mathbb{E}\left|\alpha(x, L^N) - \alpha(x, \mu)\right|^2
	\leq \frac{K^2}{N},
	\end{equation*}
	where $L^N:= \frac{1}{N}\sum_{i=1}^{N}\delta_{X^{i}}$.
\end{asm}

It is proved in \cite[Theorem 2.3]{MR1797090} that each particle $X_t^{i,N}$ converges to a solution $X_t^i$ of equation \eqref{sde system} with initial condition $X^i_0$ and driving noise $W$ and $B^i$, in the sense, that, for each $i\geq 0$ and $T>0$,
\begin{equation*}
\mathbb{E}\left[
\sup_{t\in[0,T]} \vert X^{i,N}_t - X^i_t \vert^2
\right] \leq \frac{C(T)}{N}.
\end{equation*}
Moreover, from \cite[Corollary 2.4]{MR1797090}, the empirical measure $L^N_t$ converges, as $N\to\infty$, to the conditional law $\mu_t = \mathcal{L}(X^1_t \mid \mathcal{W})$, that is, for each $\varphi\in \operatorname{Lip}_1$, $t\in[0,T]$,
\begin{equation}\label{particles:convergence empirical measure}
\mathbb{E}\vert 
\langle L^N_t, \varphi\rangle - \langle \mu_t, \varphi\rangle 
\vert
\leq \frac{C(t)}{\sqrt{N}}.
\end{equation}
Notice that here $X^1$ is not special, we could define $\mu_t = \mathcal{L}(X^i_t \mid \mathcal{W})$, for any $i\geq 0$, and have the same result. Moreover, we will see in Section \ref{section:lienear spde} that $\mu$ is the solution to equation \eqref{target equation} as given by Theorem \ref{thm:existence spde} below.

A result of propagation of chaos, similar to the one stated in \cite{sznitman1991topics} can be obtained. In this case, however, the propagation of chaos is conditional to the common noise $W$.
\begin{lem}
	The interacting particles $(X^{i,N})_{i=1,\dots, N}$ are $\mu$ chaotic, conditional to $W$, in the sense that, for $k\in \mathbb{N}$, and $\varphi^1, \dots, \varphi^k \in \varphi\in \operatorname{Lip}_1$, we have
	\begin{equation*}
	\lim_{N\to\infty}\mathbb{E}\left\vert
	\mathbb{E}\left[
	\varphi^1(X^{1,N}_t) \cdot \ldots \cdot \varphi^k(X^{k,N}_t) 
	\mid \mathcal{W} \right] 
	- \prod_{i=1}^{k}\langle \mu_t , \varphi^i \rangle
	\right\vert = 0,
	\quad \forall t \in[0,T].
	\end{equation*}
\end{lem}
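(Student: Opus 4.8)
The plan is to prove the conditional factorisation in two steps: first replace the finite-system particles $X^{i,N}$ by the limiting McKean--Vlasov particles $X^i$ of equation \eqref{sde system}, and then exploit the conditional independence of the $X^i$ given $\mathcal{W}$ to recover the product $\prod_i \langle \mu_t, \varphi^i\rangle$ exactly. Throughout I use that each $\varphi^j \in \operatorname{Lip}_1$ satisfies $\Vert\varphi^j\Vert_{C^0}\le 1$ and is $1$-Lipschitz, and that $\mu_t = \mathcal{L}(X^i_t\mid\mathcal{W})$ is, by Remark \ref{rem:law indep initial condition}, the same for every index $i$.

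First I would pass from $X^{i,N}$ to $X^i$ by a telescoping estimate. Writing the difference of the two products as a telescoping sum and bounding each factor other than the $i$-th one by $1$, one obtains pointwise in $\omega$
\begin{equation*}
\left| \prod_{i=1}^k \varphi^i(X^{i,N}_t) - \prod_{i=1}^k \varphi^i(X^i_t) \right| \leq \sum_{i=1}^k \left| X^{i,N}_t - X^i_t \right|.
\end{equation*}
Applying $\mathbb{E}[\,\cdot\mid\mathcal{W}]$, the conditional Jensen inequality and the tower property, the resulting error is controlled by $\sum_{i=1}^k \mathbb{E}\vert X^{i,N}_t - X^i_t\vert$. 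By Cauchy--Schwarz together with the rate $\mathbb{E}[\sup_{t\in[0,T]}\vert X^{i,N}_t - X^i_t\vert^2]\le C(T)/N$ recalled above, this is bounded by $k\sqrt{C(T)/N}$, which tends to $0$ as $N\to\infty$.

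The key step is the exact conditional factorisation
\begin{equation*}
\mathbb{E}\left[\prod_{i=1}^k \varphi^i(X^i_t) \mid \mathcal{W}\right] = \prod_{i=1}^k \mathbb{E}\left[\varphi^i(X^i_t) \mid \mathcal{W}\right] = \prod_{i=1}^k \langle \mu_t, \varphi^i \rangle,
\end{equation*}
where the second equality is the definition \eqref{eq:defn conditional law} of the conditional law together with the index-independence of $\mu_t$. For the first equality I would argue as follows: once the $\mathcal{W}$-measurable coefficient process $\mu$ is frozen, each limiting particle $X^i$ is the unique strong solution of the SDE \eqref{linear sde} with coefficients $b(\cdot,\cdot,\mu)$, $\sigma(\cdot,\cdot,\mu)$, $\alpha(\cdot,\cdot,\mu)$, driven by $W$ and $B^i$ from the initial datum $X^i_0$. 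By pathwise uniqueness one has a representation $X^i=\Psi(X^i_0,W,B^i)$ through a single measurable solution map $\Psi$, the same for all $i$ because the coefficients (hence $\mu$) do not depend on the index. Since the pairs $(X^i_0,B^i)$ are jointly independent and independent of $W$, and hence of $\mathcal{W}\supseteq\sigma(W)$, conditioning on $\mathcal{W}$ freezes the common argument $W$ and leaves the $X^i$ as conditionally independent functions of the idiosyncratic data $(X^i_0,B^i)$; this yields the factorisation.

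Combining the two steps, the quantity inside the limit is bounded by the first error term plus zero, and therefore tends to $0$, which proves the lemma. The main obstacle is the rigorous justification of the conditional independence in the key step: one must verify that $\Psi$ is genuinely the same measurable functional for all $i$ (which relies on $\mu$ being independent of the index, as provided by Remark \ref{rem:law indep initial condition}) and that conditioning on the common $\sigma$-algebra $\mathcal{W}$ preserves the independence carried by the idiosyncratic data $(X^i_0,B^i)$. The measurability of the regular conditional law $\mathcal{L}(X^i_t\mid\mathcal{W})$ and its invariance under the index are the delicate points, and they are exactly what the compatibility of the filtration in Definition \ref{defn:filtration} and the equality \eqref{conditional equivalence} are designed to make available.
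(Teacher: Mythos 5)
Your proof is correct, but it follows a genuinely different route from the paper's. The paper fixes $k=2$ without loss of generality, uses the conditional exchangeability of the particles to rewrite $\mathbb{E}[\varphi^1(X^{1,N}_t)\varphi^2(X^{2,N}_t)\mid\mathcal{W}]$ as an average over all distinct pairs $(i,j)$, identifies $\langle\mu_t,\varphi^1\rangle\langle\mu_t,\varphi^2\rangle$ with $\mathbb{E}[\varphi^1(X^i_t)\varphi^2(X^j_t)\mid\mathcal{W}]$ via the same conditional independence you invoke, and then controls the error through the empirical-measure convergence \eqref{particles:convergence empirical measure} from \cite[Corollary 2.4]{MR1797090}. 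You instead couple each $X^{i,N}$ directly to its limit $X^i$ and use the pathwise rate $\mathbb{E}[\sup_{t}\vert X^{i,N}_t-X^i_t\vert^2]\le C(T)/N$ from \cite[Theorem 2.3]{MR1797090} together with a telescoping Lipschitz bound; this handles general $k$ at once, dispenses with the exchangeability argument, and in fact yields an explicit rate $O(N^{-1/2})$, at the price of using the $1$-Lipschitz property of the $\varphi^i$ (which the hypotheses grant) rather than only their boundedness. Both ingredients you use are recalled in the section, so nothing external is needed. Two small remarks. First, you cite Remark \ref{rem:law indep initial condition} for the index-independence of $\mu_t=\mathcal{L}(X^i_t\mid\mathcal{W})$; that remark requires the stronger hypothesis $m>\frac{d}{2}+2$, which is not assumed in this section, whereas the needed fact is already stated just before the lemma as a consequence of \cite[Corollary 2.4]{MR1797090} — you should appeal to that instead. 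Second, your justification of the conditional factorisation via a common solution map $\Psi(X^i_0,W,B^i)$ is more detailed than the paper's (which simply asserts the conditional independence); to avoid circularity one should note that $\Psi$ is the solution map of the SDE with the $\mathcal{W}$-adapted coefficient process $\mu$ frozen, the identification of all the conditional laws with this single $\mu$ being supplied beforehand by the cited convergence result.
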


\begin{proof}
	Without loss of generality, assume $k=2$.  First notice that, for $i \neq j$, $\forall t\in [0,T]$, $X^i_t$ is independent of $X^1_t$, conditionally to $\mathcal{W}$, which implies,
	\begin{equation*}
	\langle \mu_t , \varphi^1 \rangle\langle \mu_t , \varphi^2 \rangle = 
	\mathbb{E}\left[
	\varphi^1(X_t^i)\varphi^2(X_t^j)
	\mid \mathcal{W}\right]
	\quad \mathbb{P}-a.s.,
	\end{equation*}
	where we used that, for every $i\geq 1$, $\mu_t = \mathcal{L}(X^i_t \mid \mathcal{W})$.
	Moreover, the particles are exchangeable, even when conditioned to $\mathcal{W}$, $	\forall i\neq j, \forall t \in [0,T]$,
	\begin{equation*}
	\mathcal{L}((X_t^{i,N}, X_t^{j,N}) \mid \mathcal{W}) = \mathcal{L}((X_t^{1,N}, X_t^{2,N}) \mid \mathcal{W}),
	\quad
	\mathcal{L}((X_t^{i,N}, X_t^{i}) \mid \mathcal{W}) = \mathcal{L}((X_t^{1,N}, X_t^{1})).
	\end{equation*}
	We observe, $\forall t \in [0,T]$, $\mathbb{P}$-a.s.,
	\begin{align}
	& 
	\mathbb{E}\left[
	\varphi^1(X^{1,N}_t) \varphi^2(X^{2,N}_t) 
	\mid \mathcal{W} \right] 
	- \langle \mu_t , \varphi^1 \rangle\langle \mu_t , \varphi^2 \rangle
	\nonumber \\
	& = \mathbb{E}\left[\frac{1}{N(N-1)}\sum_{i\neq j = 1}^{N}\left[
	\varphi^1(X_t^{i,N})\left(\varphi^2(X_t^{j,N}) - \varphi^2(X^j_t)\right)
	\right]\mid\mathcal{W}\right] \label{particles:term to estimate}\\
	& + \mathbb{E}\left[\frac{1}{N(N-1)}\sum_{i\neq j = 1}^{N}\left[
	\varphi^2(X_t^{i})\left(\varphi^1(X_t^{j,N}) - \varphi^1(X_t^j)\right)
	\right]\mid\mathcal{W}\right].\nonumber
	\end{align}
	We take the absolute value and expectation and show that both terms on the right hand side converge to zero as $N \to \infty$. We only consider the first term on the right hand side of \eqref{particles:term to estimate}, since the treatment of the remaining one proceed analogously.
	\begin{align*}
	\mathbb{E}&\left \vert 
	\mathbb{E}\left[\frac{1}{N(N-1)}\sum_{i\neq j = 1}^{N}\left[
	\varphi^1(X_t^{i,N})\left(\varphi^2(X_t^{j,N}) - \varphi^2(X_t^j)\right)
	\right]\mid\mathcal{W}\right]
	\right\vert \\
	\leq &\mathbb{E}\left \vert \frac{1}{N(N-1)} \sum_{i=1}^{N}\varphi^1(X_t^{i,N}) \sum_{j\neq i} \left(
	\varphi^2(X_t^{j,N}) - \varphi^2(X^j_t)
	\right)\right\vert\\
	\leq & \Vert \varphi^1 \Vert_{C^0} \mathbb{E}\left \vert \frac{1}{(N-1)} \sum_{j\neq 1} \left(
	\varphi^2(X_t^{j,N}) - \varphi^2(X_t^j)
	\right)\right\vert\\
	\leq & \Vert \varphi^1 \Vert_{C^0} \frac{N}{N-1} \mathbb{E} \left \vert
	\frac1N\sum_{j=1}^N\varphi^2(X_t^{i,N}) - \langle \mu_t, \varphi^2 \rangle 
	\right\vert
	+ \Vert \varphi^1 \Vert_{C^0}\frac{1}{N-1} \mathbb{E}\vert \varphi^2(X_t^{1,N}) \vert.
	\end{align*}
	This last quantity goes to zero because of \eqref{particles:convergence empirical measure}.
\end{proof}

\section{Linear stochastic pde}
\label{section:lienear spde}
Let $(\Omega, \mathcal{F},( \mathcal{F}_t)_{t\geq 0}, \mathbb{P})$ be a filtered probability space, compatible with a $d_1$-dimensional Brownian motion $W$ in the sense of Definition \ref{defn:filtration}.

In this section we study the well-posedness of solutions to the linear version of \eqref{target equation}, that is, the SPDE,
\begin{equation}
\label{linear ito spde}
	\left\{
\begin{array}{l}
\partial_t \mu
= \partial^2_{i,j}(a^{ij} \mu)
- \partial_{i} (b^i \mu)  
- \partial_{i}(\sigma^{ik}\mu) dW^k_t,\\
\mu|_{t=0} = \mu_0 \in \mathcal{M}_r(\mathbb{R}^d),
\end{array}
\right.
\end{equation}

where $a(t, x, \omega), \sigma(t, x, \omega)$ and $b(t, x, \omega)$ satisfy the following assumptions
\begin{asm}
	\label{assumptions:linear}
	Let $m\in \mathbb{N} \cup \{0\}$.
	\begin{enumerate}[label=(\roman*), ref=\ref{assumptions:linear} (\roman*)]
		%[label=\textbf{B.\arabic*}]
		\item \label{assumptions:linear a}The function $a(t,x,\omega) := (a^{i,j}(t,x,\omega)):\mathbb{R}_{+}\times \mathbb{R}^d\times \Omega \to \mathbb{S}^d$ is measurable and $(\mathcal{W}_t)_{t \geq 0}$-adapted. Moreover, there exists a positive constant $K_m$ such that for all $(t,\omega) \in \mathbb{R}_{+}\times \Omega$, $a(t,\cdot,\omega) \in C^m(\mathbb{R}^d;\mathbb{S}^d)$ (the set of $m$-times bounded differentiable functions on the space of real symmetric $d\times d$ matrices) and
		\begin{equation*}
		\sup_{t,\omega}\| a(t,\cdot,\omega)\|_{C^m} \leq K_m.
		\end{equation*}
		
		\item \label{assumptions:linear b} The function $b(t,x,\omega) := (b^i(t,x,\omega)):\mathbb{R}_{+}\times \mathbb{R}^d\times \Omega \to \mathbb{R}^d$ is measurable and $(\mathcal{W}_t)_{t \geq 0}$-adapted. Moreover, there exists a positive constant $K_m$ such that for all $(t,\omega) \in \mathbb{R}_{+}\times \Omega$, $b(t,\cdot,\omega) \in C^m(\mathbb{R}^d;\mathbb{R}^d)$ and
		\begin{equation*}
		\sup_{t,\omega}\| b(t,\cdot,\omega)\|_{C^m} \leq K_m.
		\end{equation*}
		
		\item \label{assumptions:linear sigma} The function $\sigma(t,x,\omega) := (\sigma^{i,k}(t,x,\omega)):\mathbb{R}_{+}\times \mathbb{R}^d\times \Omega \to \mathbb{R}^{d \times d_1}$ is measurable and $(\mathcal{W}_t)_{t \geq 0}$-adapted. Moreover, there exists a positive constant $K_m$ such that for all $(t,\omega) \in \mathbb{R}_{+}\times \Omega$, $\sigma(t,\cdot,\omega) \in C^m(\mathbb{R}^d;\mathbb{R}^{d \times d_1})$ and
		\begin{equation*}
		\sup_{t,\omega}\| \sigma(t,\cdot,\omega)\|_{C^m} \leq K_m.
		\end{equation*}
			
		\item \label{assumptions:linear Lipschitz} (Uniform Lipschitz continuity) There exists a constant $K > 0$ such that, for $t\in [0,T]$, $x, x^\prime \in \mathbb{R}^d$ and $\omega \in \Omega$,
		\begin{align*}
		\Vert & a(t,x,\omega) - a(t,x^\prime,\omega) \Vert 
		+ \Vert \sigma(t,x,\omega) - \sigma(t,x^\prime,\omega) \Vert
		+ \vert b(t,x,\omega) - b(t,x^\prime,\omega) \vert 
		 \leq K \vert x - x^\prime \vert.
		\end{align*}			
		\item \label{assumptions:linear parabolicity} (Parabolicity) For each $(t,x,\omega) \in [0, T] \times \mathbb{R}^d \times \Omega $,
		\begin{equation*}
		[2a^{ij}(t,x,\omega) - \sigma^{ik}\sigma^{jk}(t,x,\omega)] \xi^i\xi^j \geq 0, \quad \forall \xi \in \mathbb{R}^d.
		\end{equation*}	
	\end{enumerate}
\end{asm}

\begin{rem}
	Assumption \ref{assumptions:linear Lipschitz} is implied by Assumptions (i)-(iii), if $m \geq 1$.
\end{rem}

In the following we fix $r>0$ and we assume that Assumption \ref{assumptions:linear} is satisfied with $m=0$.

\begin{defn}\label{definition of linear solution}
	We say that $\mu \in L^1_{\omega, t}\mathcal{M}_r$ is a solution to equation \eqref{linear ito spde} with initial condition $\mu_0 \in \mathcal{M}_r(\mathbb{R}^d)$, if for every $\varphi \in C^2(\mathbb{R}^d)$ and $t\in[0,T]$ there exists a set of full measure $\Omega^\prime \subset \Omega$ on which the following integral equation is satisfied,
	\begin{equation}
	\label{linear integral equation}
	\langle \mu_t, \varphi \rangle 
	=  \langle \mu_0, \varphi \rangle
	+ \int_{0}^{t} \langle \mu_s, a^{i,j} \partial_{i,j}^2 \varphi\rangle ds
	+ \int_{0}^{t} \langle \mu_s, b^i \partial_i \varphi \rangle ds
	+ \int_{0}^{t} \langle \mu_s, \sigma^{i,k} \partial_i \varphi \rangle d W^k_s.
	\end{equation}
\end{defn}

\begin{rem}
	\label{remark on adaptedness}
	We note that that all the terms in the right-hand side of \eqref{linear integral equation} are well-defined, because the coefficients $a, b, \sigma$ are $(\mathcal{W}_t)_{t\geq 0}$-adapted and bounded and each $(\mathcal{B}(\mathbb{R}_{+})\times \mathcal{F})$-measurable, $(\mathcal{W}_{t})_{t\geq 0}$-adapted process has a predictable $dt \otimes \mathbb{P}$-version \cite[Theorem~3.8]{MR3136102}.
\end{rem}

We next consider the existence of solutions to the linear equation \eqref{linear ito spde}. Consider the linear version of system \eqref{sde system}, that is
\begin{equation}\label{linear sde system}
\left\{
\begin{array}{l}
dX_t = b(t,X_t)dt + \sigma(t,X_t) dW_t + \alpha(t,X_t)dB_t\\
X_t|_{t=0} = X_0,\\
\end{array}
\right.
\end{equation}
where the coefficients are fixed, $(\mathcal{F}_{t})_{t\geq 0}$-adapted stochastic processes satisfying Assumptions \ref{assumptions:linear} with $m=0$. $X_0:\Omega \rightarrow \mathbb{R}^d$ is an $\mathcal{F}_0$-measurable random variable.
As we noted in the proof of Theorem \ref{wellposedness sde}, there exists a unique time-continuous $(\mathcal{F}_t)_{t\geq 0}$-adapted solution $(X_t)_{t\in [0,T]}$ to equation \eqref{linear sde}, see \cite[Theorem 3.1.1]{MR3410409}.
Using this solution, the following lemma can be proved in the same way as Theorem \ref{thm:existence spde}, below.
\begin{lem}
	\label{existence linear spde}
	Let $X_0:\Omega \rightarrow \mathbb{R}^d$ be an $\mathcal{F}_0$-measurable random variable, $r>0$ and define $\mu_0 := r\mathcal{L}(X_0)$. Assume Assumption \ref{assumptions:linear} with $m=0$. Then, there exists a solution $\mu \in L^1_{\omega}C_t\mathcal{M}_r$ to equation \eqref{linear ito spde} in the sense of Definition \ref{definition of linear solution}.
	This solution is given as $\mu = (\mu_t)_{t\in [0,T]} := (r \mathcal{L}(X_t \mid \mathcal{W}))_{t\in [0,T]}$ for every $t \in [0,T]$, where $X_t$ is the unique strong solution to equation \eqref{linear sde system}.
\end{lem}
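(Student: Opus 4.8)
The plan is to verify directly that the candidate $\mu_t := r\mathcal{L}(X_t\mid\mathcal{W})$ solves \eqref{linear ito spde} in the sense of Definition \ref{definition of linear solution}, where $X$ is the unique strong solution to \eqref{linear sde system}. First I would check the structural properties. Since $\mathcal{L}(X_t\mid\mathcal{W})$ is a conditional probability measure, $\mu_t$ is a non-negative measure of total variation $r$, so $\mu_t\in\mathcal{M}_r(\mathbb{R}^d)$; by \eqref{conditional equivalence} we have $\langle\mu_t,\varphi\rangle = r\,\mathbb{E}[\varphi(X_t)\mid\mathcal{W}_t]$, so $\mu$ is $(\mathcal{W}_t)_{t\geq0}$-adapted. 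For every $\varphi\in\operatorname{Lip}_1$ one has $\langle\mu_t,\varphi\rangle\leq r\|\varphi\|_{C^0}\leq r$, hence $\|\mu_t\|\leq r$ and the bound $\mathbb{E}[\sup_t\|\mu_t\|]<\infty$ is automatic. Time continuity follows exactly as in the proof of Theorem \ref{wellposedness sde}: the conditional Jensen inequality gives $\mathbb{E}\rho(\mu_t,\mu_s)^p\leq r^p\,\mathbb{E}|X_t-X_s|^p$, and the standard moment bound $\mathbb{E}|X_t-X_s|^p\leq C|t-s|^{p/2}$ for the SDE \eqref{linear sde system}, together with Kolmogorov's continuity theorem, yields a modification in $L^1_\omega C_t\mathcal{M}_r$.

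The core of the argument is an application of It\^o's formula followed by conditioning on the common noise. For $\varphi\in C_c^\infty(\mathbb{R}^d)$ (the general $\varphi\in C^2$ is recovered afterwards by a cut-off argument, using boundedness of the coefficients and finiteness of $\mu_s$), It\^o's formula applied to $\varphi(X_t)$ along \eqref{linear sde system} gives
\begin{align*}
\varphi(X_t) = \varphi(X_0)
&+ \int_0^t\bigl(a^{i,j}\partial^2_{i,j}\varphi + b^i\partial_i\varphi\bigr)(s,X_s)\,ds\\
&+ \int_0^t \sigma^{i,k}\partial_i\varphi(s,X_s)\,dW^k_s
+ \int_0^t \alpha^{il}\partial_i\varphi(s,X_s)\,dB^l_s,
\end{align*}
where I have used that the quadratic covariation of $X$ is $d\langle X^i,X^j\rangle_s = \bigl(\sigma^{i,k}\sigma^{j,k}+\alpha^{il}\alpha^{jl}\bigr)(s,X_s)\,ds = 2a^{i,j}(s,X_s)\,ds$, which is exactly the identity $\alpha^{il}\alpha^{jl} = 2a^{i,j}-\sigma^{i,k}\sigma^{j,k}$ built into the definition \eqref{definition alpha} of $\alpha$ together with the parabolicity Assumption \ref{assumptions:linear parabolicity}. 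The factor $\tfrac12$ from It\^o's formula then cancels the factor $2$, producing the second-order coefficient $a^{i,j}$.

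Next I would take $\mathbb{E}[\,\cdot\mid\mathcal{W}]$ and multiply by $r$. The initial term $r\,\mathbb{E}[\varphi(X_0)\mid\mathcal{W}]$ equals $r\,\mathbb{E}[\varphi(X_0)] = \langle\mu_0,\varphi\rangle$ since $X_0$ is independent of $\mathcal{W}$. For the Lebesgue integral I would invoke the conditional Fubini theorem and the $\mathcal{W}_s$-measurability of the coefficients, so that $r\,\mathbb{E}[(a^{i,j}\partial^2_{i,j}\varphi + b^i\partial_i\varphi)(s,X_s)\mid\mathcal{W}] = \langle\mu_s,a^{i,j}\partial^2_{i,j}\varphi\rangle + \langle\mu_s,b^i\partial_i\varphi\rangle$, recovering the first two integrals of \eqref{linear integral equation}. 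The decisive point is the treatment of the two stochastic integrals under conditioning: the $dB$-integral vanishes because $B$ is independent of $W$, so conditionally on $\mathcal{W}$ it remains a martingale with zero mean, while the $dW$-integral must be interchanged with the conditional expectation to produce $r\int_0^t\mathbb{E}[\sigma^{i,k}\partial_i\varphi(s,X_s)\mid\mathcal{W}_s]\,dW^k_s = \int_0^t\langle\mu_s,\sigma^{i,k}\partial_i\varphi\rangle\,dW^k_s$. Both of these conditional stochastic Fubini statements are precisely the content of Lemma \ref{lemma:conditional integrals} (together with \eqref{conditional equivalence} and Lemma \ref{appendix lemma:conditioning}), and their justification in the present conditioning setup is the main technical obstacle; everything else is bookkeeping. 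Assembling the four pieces yields \eqref{linear integral equation}, completing the verification that $\mu$ is a solution in $L^1_\omega C_t\mathcal{M}_r$.
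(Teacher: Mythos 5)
Your proposal is correct and follows essentially the same route as the paper, which proves this lemma by reference to the proof of Theorem \ref{thm:existence spde}: apply It\^o's formula to $\varphi(X_t)$, use the identity $\alpha\alpha^T+\sigma\sigma^T=2a$ from the definition \eqref{definition alpha}, multiply by $r$, and take conditional expectation with respect to $\mathcal{W}$ via Lemma \ref{lemma:conditional integrals}, with the continuity and membership in $L^1_{\omega}C_t\mathcal{M}_r$ obtained from the Kolmogorov argument as in Theorem \ref{wellposedness sde}. Your additional remarks on the cut-off for general $C^2$ test functions and the structural verification are consistent elaborations of the same argument.
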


To prove the uniqueness of solutions to the linear equation \eqref{linear ito spde}, we introduce the dual BSPDE. We fix $t \in [0, T]$ and we take a test function $\varphi$  which is $\mathcal{F}_t \times \mathcal{B}(\mathbb{R}^d)$-measurable with $\varphi \in L^\infty(\Omega, \mathcal{F}_t, C_c^\infty(\mathbb{R}^d))$. Consider the following BSPDE
\begin{equation}\label{bspde}
\left\{
\begin{array}{l}
\partial_t f 
= - a^{i,j} \partial_{i,j}^2 f
- b^i \partial_{i} f 
- \sigma^{i,k} \partial_{i} v^k
+ v^k dW^k \\
f_t = \varphi,
\end{array}
\right.
\end{equation}
on $\mathbb{R}^d\times [0, t]$.
\begin{defn}
	\label{defn: back sol}
	 Let $t\in[0,T]$, $m\geq 1$ and  $\varphi \in L^\infty(\Omega, \mathcal{W}_t, C_c^\infty(\mathbb{R}^d))$. A process $(f,v) : [0,t] \times \Omega \to W^{2,m}  \times W^{1,m} $ is a solution to equation \eqref{bspde} with terminal condition $\varphi$, if it is progressively measurable and if there is a set $\Omega^\prime \subseteq \Omega$ of full measure such that
	\begin{equation*}
	f(s, x) = \varphi(x)  
	+ \int_s^t \left( a^{i,j} \partial_{i,j}^2 f
	+ b^i  \partial_{i} f 
	+ \sigma^{i,k} \partial_{i} v^k
	\right)(r,x) dr
	- \int_s^t v^k(r,x) dW_r^k \\
	\end{equation*}
	for all $\omega \in \Omega^\prime$, $s \in [0 , t]$ and $x \in \mathbb{R}^d$.
\end{defn}
Let $m$ be an integer such that $2(m-2) > d$. If the coefficients $b, \sigma,a$ satisfy Assumptions \ref{assumptions:linear}, it follows from \cite[Theorem 2.1, Corollary 2.2]{MR3017034} that there exists a pair of random fields $(f, v)$ such that,
\begin{equation}
\label{regularity of f and v}
f\in L^2_\omega C^0_tW^{m, 2}_x 
\quad \mbox{and } \sigma \cdot \nabla f + v \in L^2_{\omega,t} W^{m, 2}_x,
\end{equation}
which is jointly continuous in $(t,x)$ and is a strong solution to equation \eqref{bspde} in the sense of Definition \ref{defn: back sol}.

By the assumptions on $m$ and by the Sobolev embedding theorem it follows that 
\begin{equation}
\label{more regularity of f and v}
f, \sigma \cdot \nabla f + v \in L^2_\omega C^0_tC_x^2(\mathbb{R}^2),
\quad 
\mbox{which implies that}
\quad
v \in L^2_{\omega, t}C_x^1(\mathbb{R}^d). 
\end{equation}

We can now show the duality between equations \eqref{linear ito spde} and \eqref{bspde}.

%\begin{lem}
%	\label{duality lemma}
%	Fix $t\in [0,T]$ and let $m$ be an integer such that $2(m-2) > d$. If the coefficients $b, \sigma, a$ are $(\mathcal{F}_t)_{t\in [0,T]}$-adapted processes satisfying Assumptions \ref{assumptions:linear}, and if $\mu \in L^1_{\omega,t}\mathcal{M}$ satisfies the integral equation \eqref{linear integral equation} and $f$ is a solution to equation \eqref{bspde} in the sense of Definition \ref{defn: back sol} with terminal condition $f_t = \varphi \in L^\infty(\Omega, \mathcal{W}_t, C_c^\infty(\mathbb{R}^d))$, then
%	\begin{equation*}
%	\mathbb{E}\langle \mu_t, \varphi \rangle = \mathbb{E}\langle \mu_0, f_0 \rangle.
%	\end{equation*}
%\end{lem}

\begin{lem}
	\label{duality lemma}
	Fix $t\in [0,T]$, let $m$ be an integer such that $2(m-2) > d$ and let $b, \sigma, a$ be $(\mathcal{W}_t)_{t\in [0,T]}$-adapted processes satisfying Assumptions \ref{assumptions:linear}. 
	
	Let $\mu : [0,T]\times \Omega \rightarrow \mathcal{M}_{\leq 2r}^{\pm}$ be an $(\mathcal{W}_t)_{t\in [0,T]}$-adapted process, such that for every $\varphi \in C^2(\mathbb{R}^d)$ and $t\in[0,T]$ there exists a set of full measure $\Omega^\prime \subset \Omega$ on which $\mu$ satisfies the integral equation \eqref{linear integral equation}. 
	
	If $f$ is a solution to equation \eqref{bspde} in the sense of Definition \ref{defn: back sol} with terminal condition $f_t = \varphi \in L^\infty(\Omega, \mathcal{W}_t, C_c^\infty(\mathbb{R}^d))$, then
	\begin{equation*}
	\mathbb{E}\langle \mu_t, \varphi \rangle = \mathbb{E}\langle \mu_0, f_0 \rangle.
	\end{equation*}
\end{lem}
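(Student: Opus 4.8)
The strategy is to read the claim as the expectation of an It\^o product rule for the real-valued process $s \mapsto \langle \mu_s, f_s\rangle$, in which all bounded-variation terms cancel and the surviving stochastic integrals have vanishing mean. Writing $\mathcal{L}_s := a^{ij}\partial_{i,j}^2 + b^i\partial_i$, the weak formulation \eqref{linear integral equation} reads, informally, $d\langle\mu_s,\psi\rangle = \langle\mu_s,\mathcal{L}_s\psi\rangle\,ds + \langle\mu_s,\sigma^{ik}\partial_i\psi\rangle\,dW^k_s$, while Definition \ref{defn: back sol} gives $df_s = -\big(\mathcal{L}_s f_s + \sigma^{ik}\partial_i v^k_s\big)\,ds + v^k_s\,dW^k_s$. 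Pairing the forward equation against $f_s$, integrating the backward equation against $\mu_s$, and adding the covariation of the two $dW^k$-martingales, which contributes exactly $\langle\mu_s,\sigma^{ik}\partial_i v^k_s\rangle\,ds$, all drift terms cancel and one is left with
\[
\langle\mu_t,f_t\rangle - \langle\mu_0,f_0\rangle = \sum_k \int_0^t \Big(\langle\mu_s,\sigma^{ik}\partial_i f_s\rangle + \langle\mu_s,v^k_s\rangle\Big)\,dW^k_s .
\]
Since $f_t = \varphi$ and, by \eqref{more regularity of f and v} together with $\|\mu_s\|_{TV}\le 2r$ and the boundedness of $\sigma$, the integrand lies in $L^2_{\omega,t}$, the right-hand side is a genuine martingale of zero expectation, and taking $\mathbb{E}$ yields $\mathbb{E}\langle\mu_t,\varphi\rangle = \mathbb{E}\langle\mu_0,f_0\rangle$.

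Two points require care. First, \eqref{linear integral equation} is only assumed for \emph{deterministic} $\varphi \in C^2$, whereas $f_s$ is a random, $\mathcal{W}_s$-measurable $C^2$-field. I would therefore upgrade the identity to $\mathcal{W}_{s_0}$-measurable test functions on a subinterval $[s_0,s_1]$: approximating such a $\psi$ in $C^2$ by simple functions $\sum_l \mathbbm{1}_{A_l}\psi_l$ with $A_l \in \mathcal{W}_{s_0}$ and $\psi_l$ deterministic, applying \eqref{linear integral equation} to each $\psi_l$, and pulling the $\mathcal{F}_{s_0}$-measurable indicator $\mathbbm{1}_{A_l}$ inside the stochastic integral over $[s_0,s_1]$, where it acts as a constant, so that $\mathbbm{1}_{A_l}\langle\mu_s,\sigma^{ik}\partial_i\psi_l\rangle = \langle\mu_s,\sigma^{ik}\partial_i(\mathbbm{1}_{A_l}\psi_l)\rangle$. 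Here the compatibility of the filtration and the total-variation bound on $\mu$ provide the estimates needed to pass to the limit. Note also that \eqref{linear integral equation} forces $s\mapsto\langle\mu_s,\psi\rangle$ to be continuous for each fixed $\psi$, so that $\mu$ may be taken weakly continuous in time; this is used freely below.

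Second, and this is the main obstacle, I would make the product rule rigorous through a time discretization $0 = s_0 < \dots < s_n = t$, decomposing each increment as
\[
\langle\mu_{s_{j+1}},f_{s_{j+1}}\rangle - \langle\mu_{s_j},f_{s_j}\rangle
= \langle\mu_{s_{j+1}} - \mu_{s_j}, f_{s_j}\rangle
+ \langle\mu_{s_j}, f_{s_{j+1}} - f_{s_j}\rangle
+ \langle\mu_{s_{j+1}} - \mu_{s_j}, f_{s_{j+1}} - f_{s_j}\rangle .
\]
The first summand is handled by the upgraded forward equation with the $\mathcal{W}_{s_j}$-measurable test function $f_{s_j}$; the second by integrating the backward equation \eqref{bspde} against the fixed measure $\mu_{s_j}$; and the third, the genuinely quadratic cross term, is where the It\^o correction $\int_0^t\langle\mu_s,\sigma^{ik}\partial_i v^k_s\rangle\,ds$ emerges, its conditional expectation matching the covariation of the $dW^k$-martingale parts of $\mu$ and $f$. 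The technical heart of the proof is to show, using the $C^2_x$-bound on $f$, the $C^1_x$-bound on $v$ from \eqref{more regularity of f and v}, the uniform bound $\|\mu_s\|_{TV}\le 2r$, and the boundedness of $a,b,\sigma$, that as the mesh tends to zero the drift Riemann sums converge to the corresponding time integrals, the martingale Riemann sums converge to the stochastic integrals above, and the quadratic cross term converges to this single covariation integral while all remaining products are of higher order and vanish. The cancellation of the drift terms then leaves precisely the displayed martingale, completing the argument.
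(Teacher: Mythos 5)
Your high-level identity is the right one, and the cancellation structure you describe (drifts cancel, the It\^o correction $\int_0^t\langle\mu_s,\sigma^{ik}\partial_i v^k_s\rangle\,ds$ cancels against the $-\sigma^{ik}\partial_i v^k$ term of the backward equation, leaving a mean-zero martingale) is exactly what the paper exploits. But your route is genuinely different from the paper's, and the difference matters. The paper does not prove an infinite-dimensional product rule for $\langle\mu_s,f_s\rangle$ at all: it convolves $\mu$ with a deterministic mollifier, so that for each fixed $x$ the quantity $\mu^\epsilon_s(x)=\langle\mu_s,\eta^\epsilon_x\rangle$ is a real-valued It\^o process obtained by testing \eqref{linear integral equation} against the \emph{deterministic} function $\eta^\epsilon_x\in C_c^\infty$. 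The scalar It\^o product rule applied to $\mu^\epsilon_s(x)f_s(x)$, followed by expectation, Fubini in $x$, and a dominated-convergence limit $\epsilon\to 0$ (using the maximum principle for $f$ and $v\in L^2_{\omega,t}C^1_x$), gives the result. This device simultaneously disposes of both obstacles you single out: no random test functions ever enter the forward equation, and no genuinely infinite-dimensional quadratic covariation has to be identified.

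By contrast, the step you call the ``technical heart'' --- the convergence of the cross term $\sum_j\langle\mu_{s_{j+1}}-\mu_{s_j},\,f_{s_{j+1}}-f_{s_j}\rangle$ to the covariation integral, and the vanishing of ``all remaining products'' --- is asserted rather than carried out, and it is not routine with the bounds you have available. The increment $\mu_{s_{j+1}}-\mu_{s_j}$ does \emph{not} become small in total variation as the mesh shrinks (only its pairing with a fixed smooth function does), so estimates of the form $\|\mu_{s_{j+1}}-\mu_{s_j}\|_{TV}\,\|f_{s_{j+1}}-f_{s_j}\|_{C^0}$ sum to $O(1)$, not $o(1)$; to kill the lower-order products you must expand \emph{both} increments through their equations. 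But the martingale part of the $f$-increment involves $v$, which by \eqref{more regularity of f and v} is only $C^1$ in $x$ and only $L^2$ in time, one derivative short of the $C^2$ regularity that \eqref{linear integral equation} (even after your upgrade to $\mathcal{W}_{s_j}$-measurable test functions) requires, and it is anticipating relative to $s_j$, so it cannot be inserted into the forward equation directly. These issues can be overcome (e.g.\ by an additional spatial mollification of $v$, or by a Fubini/conditioning argument term by term), but as written the proposal has a gap precisely where the paper's mollification trick was designed to avoid one. Your first point --- the upgrade of \eqref{linear integral equation} to $\mathcal{W}_{s_0}$-measurable simple test functions --- is sound and standard.
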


\begin{proof}
	Let $\eta_\epsilon$ be a standard mollifier, i.e., $\eta_\epsilon(x) := \frac{1}{\epsilon^d}\eta(\frac{x}{\epsilon})$, with
	\begin{equation*}
	\eta \in C_c^\infty(\mathbb{R}^d), \quad \eta \geq 0, \quad \int_{\mathbb{R}^d} \eta(x) dx = 1.
	\end{equation*}
	 For each $x \in \mathbb{R}^d$, we define $\eta^\epsilon_x (y) := \eta^\epsilon(x - y)$. The function $\mu^\epsilon := \mu \ast \eta^\epsilon$ satisfies: $\forall x \in \mathbb{R}^d, \forall \epsilon > 0$, there exists a set of full measure $\Omega_{x,\epsilon}$, such that
	\begin{align*}
	\mu_t^\epsilon(x)
	= & \mu_0^\epsilon(x)
	- \int_{0}^{t} \langle \mu_s, \sigma_s^{i,k} \partial_{x_i} \eta^\epsilon_x \rangle d W^k_s
	+ \int_{0}^{t} \langle \mu_s, a_s^{i,j} \partial_{x_i,x_j}^2 \eta^\epsilon_x - b_s^i \partial_{x_i} \eta^\epsilon_x\rangle ds,
	\quad \forall\omega \in \Omega_{x,\epsilon}.
	\end{align*}
	
	Using It\^{o}'s formula we can compute the product $\mu^\epsilon_t(x) f_t(x) $ and obtain the following equality on a set of full measure $\Omega_{x,\epsilon}$, possibly different from the previous one,
	\begin{align}
	\mu^\epsilon_t(x)f_t(x)= & \mu_0^\epsilon(x)f_0(x)
	- \int_{0}^{t} \langle \mu_s, \sigma_s^{i,k} \partial_{x_i} \eta^\epsilon_x \rangle f_s(x) d W^k_s
	+ \int_{0}^{t} \mu_s^\epsilon(x) v^k_s(x) d W^k_s \label{martingales in duality}\\
	& + \int_{0}^{t}  \langle \mu_s, a_s^{i,j} \partial_{{x_i},{x_j}}^2 \eta^\epsilon_x - b_s^i \partial_{x_i} \eta^\epsilon_x\rangle f_s(x) ds \label{f terms in duality_0}\\
	& - \int_{0}^{t} \mu_s^\epsilon(x) \left[ a_s^{i,j} \partial_{{x_i},{x_j}}^2f_s + b^i_s \partial_{x_i} f_s \right](x) ds\label{f terms in duality}\\
	& - \int_{0}^{t}  \langle \mu_s , \sigma^{i,k} \partial_{x_i} \eta_x^\epsilon \rangle v^k_s(x) ds
	- \int_{0}^{t} \mu^\epsilon_s(x) [\sigma^{i,k} \partial_{x_i} v^k_s](x) ds. \label{v terms in duality}
	\end{align}
	
	For every $x$, we can take the expectation of both sides of \eqref{martingales in duality} - \eqref{v terms in duality} and obtain an equality on all of $\mathbb{R}^d$. We notice that the It\^o integrals in \eqref{martingales in duality} are martingales because their arguments are in $L^2_{\omega, t}$, which is a consequence of \eqref{more regularity of f and v}. It follows that both martingale terms vanish in expectation.
	
	Next, we integrate over $\mathbb{R}^d$, use Fubini's theorem to interchange Lebesgue integration and expectation, and take the limit $\epsilon\to 0$. It remains to identify the limit as $\epsilon\to 0$ of each of the resulting terms:
	\begin{align*}
	\mathbb{E}\langle \mu_t^\epsilon, f_t\rangle 
	- \mathbb{E}\langle \mu_t, f_t \rangle 
	&= \mathbb{E}\left[\iint_{\mathbb{R}^d\times \mathbb{R}^d} \eta^\epsilon(x-y)f_t(x)d\mu_t(y)dx
	- \int_{\mathbb{R}^d} f_t(x)d\mu_t(x)\right] \\
	& =\mathbb{E}\int_{\mathbb{R}^d} [(\eta^\epsilon \ast f_t)(x) - f_t(x) ]d \mu_t(x).
	\end{align*}
	
	From the regularity of $f$ and $v$, namely \eqref{more regularity of f and v}, it follows that the integration of $f_t(x)$ with respect to $\mu(dx)$ is well defined. The joint time-space continuity of $f$ together with the maximum principle for the solution of the backward equation \eqref{bspde}, which is proved in \cite[Corollary 2.3]{MR3017034}, imply that there exists a constant $C > 0$, such that, for all $t\in[0,T]$, $x\in \mathbb{R}^d$ and for almost all $\omega \in \Omega$,
	\begin{equation*}
	\vert f_t(x,\omega) \vert \leq C.
	\end{equation*}
	Hence, we can apply dominated convergence to conclude 
	\begin{equation*}
	\mathbb{E}\int_{\mathbb{R}^d} [(\eta^\epsilon \ast f_t)(x) - f_t(x) ]d \mu_t(x) \to 0, \quad \mbox{as } \epsilon \to 0.
	\end{equation*}
	The same argument can be applied at time $t=0$. We next study the convergence of \eqref{v terms in duality}.
	\begin{align}
	-\mathbb{E}\int_{0}^{t}& \langle \langle \mu_s , \sigma^{i,k} \partial_{x_i} \eta_\cdot^\epsilon \rangle, v^k_s \rangle ds
	- \mathbb{E}\int_{0}^{t} \langle \mu^\epsilon_s, \sigma^{i,k} \partial_{x_i} v^k_s\rangle ds \nonumber\\
	= & \mathbb{E}\int_{0}^{t}\int_{\mathbb{R}^d} \sigma^{i,k} (\partial_i v^k_s \ast \eta^\epsilon)(x)\mu_s(dx)ds
	- \mathbb{E}\int_{0}^{t}\int_{\mathbb{R}^d} [(\sigma^{i,k} \partial_{i}v^k_s) \ast \eta^\epsilon](x) \mu_s(dx)ds. \label{opposite terms}
	\end{align}
	Both the first and the second integrand converge, as $\epsilon \to 0$ to $\sigma_s^{i,k}(x) \partial_{i} v_s^k(x)$ because of the properties of the mollifier $\eta_\epsilon$. We aim to take the limit under the integration to conclude that the right-hand side of \eqref{opposite terms} converges to zero in the limit $\epsilon \downarrow 0$. Notice that
	\begin{equation*}
	\sigma^{i,k} (\partial_i v^k_s \ast \eta^\epsilon)(x), \; [(\sigma^{i,k} \partial_{i}v^k_s) \ast \eta^\epsilon](x) \leq K_{m}\Vert v_s \Vert_{C^1},
	\quad \mathbb{P}-a.s.,
	\end{equation*}
	which is integrable with respect to the measure $\mu_s(dx)ds\mathbb{P}(d\omega)$ since $v \in L^2_{\omega, t}C_x^1$. Hence, we can apply the dominated convergence theorem in \eqref{opposite terms} to conclude.
	
	The same reasoning can be applied to show that \eqref{f terms in duality_0}, \eqref{f terms in duality} converge to
	\begin{equation*}
	\pm\int_{0}^{t} \langle \mu_s, a_s^{i,j} \partial_{i,j}^2 f_s + b_s^i \partial_i f_s \rangle ds,
	\end{equation*}
	and thus cancel in the limit $\epsilon \downarrow 0$.
\end{proof}

\begin{theorem}\label{wellposedness linear spde}
	Let $m$ be an integer such that $2(m-2) > d$. If the coefficients $b, \sigma, a$ are $(\mathcal{F}_t)_{t\in[0,T]}$-adapted processes satisfying Assumptions \ref{assumptions:linear}, then, equation \eqref{linear ito spde} admits at most one solution in the sense of Definition \ref{definition of linear solution}.
\end{theorem}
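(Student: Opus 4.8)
The plan is to read off uniqueness directly from the duality identity of Lemma \ref{duality lemma}. Suppose $\mu^1,\mu^2 \in L^1_{\omega,t}\mathcal{M}_r$ are two solutions of \eqref{linear ito spde} in the sense of Definition \ref{definition of linear solution} sharing the initial datum $\mu_0 \in \mathcal{M}_r(\mathbb{R}^d)$, and set $\mu := \mu^1-\mu^2$. By linearity of the integral equation \eqref{linear integral equation}, $\mu$ is an $(\mathcal{W}_t)_{t\in[0,T]}$-adapted process satisfying \eqref{linear integral equation} with vanishing initial condition $\mu_0=0$; moreover $\|\mu^i_t\|_{TV}=r$ forces $\mu_t \in \mathcal{M}^{\pm}_{\leq 2r}$ for every $t$. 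Hence $\mu$ meets exactly the hypotheses imposed on the measure process in Lemma \ref{duality lemma}, which is precisely why that lemma was stated for signed measures in $\mathcal{M}^{\pm}_{\leq 2r}$ rather than for probability measures.

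Fix $t\in[0,T]$. Since $2(m-2)>d$, for every terminal condition $\varphi \in L^\infty(\Omega,\mathcal{W}_t,C_c^\infty(\mathbb{R}^d))$ the results of \cite{MR3017034} together with Sobolev embedding (cf.\ \eqref{more regularity of f and v}) provide a solution $(f,v)$ of the backward equation \eqref{bspde} with terminal datum $\varphi$; as the coefficients satisfy Assumptions \ref{assumptions:linear}, Lemma \ref{duality lemma} applies and yields
\begin{equation*}
\mathbb{E}\langle\mu_t,\varphi\rangle = \mathbb{E}\langle\mu_0,f_0\rangle = 0,
\end{equation*}
the last equality because $\mu_0=0$. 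The task is then to promote this family of scalar identities, valid for all admissible random test functions, to the pointwise statement $\mu_t=0$ almost surely.

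To this end I would exploit the adaptedness of $\mu$ to $(\mathcal{W}_t)$ together with the freedom to take $\varphi$ itself $\mathcal{W}_t$-measurable. Fix a deterministic $\phi\in C_c^\infty(\mathbb{R}^d)$ and let $g$ be any bounded $\mathcal{W}_t$-measurable scalar random variable; then $\varphi:=g\,\phi$ lies in $L^\infty(\Omega,\mathcal{W}_t,C_c^\infty(\mathbb{R}^d))$ and the identity above reads $\mathbb{E}[\,g\,\langle\mu_t,\phi\rangle\,]=0$. Since $\mu_t$ is $\mathcal{W}_t$-measurable, the real random variable $Z:=\langle\mu_t,\phi\rangle$ is $\mathcal{W}_t$-measurable, so choosing $g:=\operatorname{sgn}(Z)$ gives $\mathbb{E}|\langle\mu_t,\phi\rangle|=0$ and hence $\langle\mu_t,\phi\rangle=0$ almost surely. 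Taking a countable family $(\phi_n)_n\subset C_c^\infty(\mathbb{R}^d)$ dense in $C_0(\mathbb{R}^d)$ and intersecting the corresponding null sets, $\langle\mu_t,\phi_n\rangle=0$ for all $n$ on a set of full measure, and since such a family separates the points of $\mathcal{M}^\pm_{\leq 2r}$ (by Riesz representation) we conclude $\mu_t=0$ almost surely. As $t\in[0,T]$ was arbitrary, Fubini yields $\mu=0$ in $L^1_{\omega,t}\mathcal{M}_r$, i.e.\ $\mu^1=\mu^2$.

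The only genuinely delicate point is the last passage from the integrated identity to the almost-sure vanishing of $\mu_t$: in the purely deterministic duality argument one obtains merely $\mathbb{E}\langle\mu_t,\phi\rangle=0$, which is far too weak to force $\mu_t=0$. What makes the stochastic argument succeed is exactly the $(\mathcal{W}_t)$-adaptedness of $\mu$ combined with the admissibility of $\mathcal{W}_t$-measurable (hence sign-selecting) test functions in Lemma \ref{duality lemma}. The remaining verifications, namely that $g\phi$ indeed belongs to $L^\infty(\Omega,\mathcal{W}_t,C_c^\infty(\mathbb{R}^d))$ and that the chosen dense family determines finite signed measures, are routine.
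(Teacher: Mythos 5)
Your proposal is correct and follows essentially the same route as the paper: form $\mu:=\mu^1-\mu^2\in\mathcal{M}^{\pm}_{\leq 2r}$, apply Lemma \ref{duality lemma} with the random terminal condition $\operatorname{sign}\langle\mu_t,\varphi\rangle\,\varphi$ (your $g\phi$ is exactly the paper's $\tilde\varphi$) to get $\mathbb{E}\vert\langle\mu_t,\varphi\rangle\vert=0$, and then pass to a countable separating family of test functions. The only cosmetic difference is that the paper additionally uses continuity of $t\mapsto\langle\mu_t,\varphi\rangle$ to obtain a single null set valid for all $t$ simultaneously, whereas you fix $t$ first; both suffice for uniqueness in $L^1_{\omega,t}\mathcal{M}_r$.
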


\begin{proof}
	We use the weak formulation \eqref{linear integral equation} and a duality arguments to prove that the solution to equation \eqref{linear ito spde} is unique if equation \eqref{bspde} admits a classical solution. Let $\mu^1, \mu^2 \in L^1_{\omega, t}\mathcal{M}_r$ be two solutions to \eqref{linear ito spde} in the sense of Definition \ref{definition of linear solution} and define $\mu := \mu^1 - \mu^2: [0,T] \times \Omega \rightarrow \mathcal{M}^{\pm}_{\leq 2r}$. Let $t \in [0,T]$, $\varphi \in C_c^\infty(\mathbb{R}^d)$, and define $\tilde \varphi := \operatorname{sign}\langle \mu_t, \varphi \rangle \varphi \in L^\infty(\Omega, \mathcal{W}_t, C_c^\infty(\mathbb{R}^d))$.
	
	By linearity, $\mu$ satisfies the integral formulation \eqref{linear integral equation}. Hence, it follows from Lemma \ref{duality lemma}, applied to equation \eqref{bspde} with terminal condition $\tilde \varphi$, that
	\begin{equation*}
	\mathbb{E}\vert \langle \mu_t, \varphi \rangle \vert  = \mathbb{E}[\langle \mu_t, \tilde \varphi \rangle] = \mathbb{E}[\langle \mu_0, f_0 \rangle] = 0,
	\end{equation*}
	where the last equality is satisfied when the initial conditions, $\mu^1_0$ and $\mu^2_0$, are the same. Hence, we have that $\langle \mu_t, \varphi \rangle = 0$ on a set of full measure. Since the function $t \mapsto \langle \mu_t, \varphi \rangle$ is continuous, we can easily see that the set of full measure only depends on $\varphi$, that is, 
	\begin{equation*}
	\forall \varphi \in C_c^\infty(\mathbb{R}^d, \mathbb{R}_{+}), 
	\quad \exists \Omega_\varphi \subset \Omega, 
	\quad \forall t \in [0, T], 
	\quad \langle \mu_t, \varphi \rangle = 0.
	\end{equation*} 
	Let $R \in \mathbb{N}$, we call $B_R \subset \mathbb{R}^d$, the ball of radius $R$. We have that the space $C_c^\infty(B_R, \mathbb{R}_{+})$ is separable. Hence,
	\begin{equation*}
	\forall R \in \mathbb{N},
	\quad \exists \Omega_R \subset \Omega,
	\quad \forall \varphi \in C_c^\infty(B_R, \mathbb{R}_{+}), 
	\quad \forall t \in [0, T],
	\quad \langle \mu_t, \varphi \rangle = 0.
	\end{equation*} 
	We conclude the proof by noticing that on the set of full measure $\cap_{R\in \mathbb{N}} \Omega_R $, $\langle \mu_t, \varphi \rangle = 0$, $\forall \varphi \in C_c^\infty(\mathbb{R}^d, \mathbb{R}_{+})$, $\forall t \in [0, T]$.
\end{proof}

\section{Non-local Fokker-Planck equations}
\label{section: main results}
In this section we study the existence and uniqueness of solutions to equation \eqref{target equation}. After giving the definition of a solution, we prove the existence via the McKean-Vlasov SDE \eqref{sde system}. The uniqueness follows from the uniqueness of solutions to the linear equation and well-posedness of the McKean-Vlasov SDE.

We will use the following assumptions.

\begin{asm}
	\label{assumptions:nonlinear}
	Let $m \in \mathbb{N} \cup \{0\}$, assume
	\begin{enumerate}[label=(\roman*), ref=\ref{assumptions:nonlinear} (\roman*)]
		%[label=\textbf{C.\arabic*}]
		
		\item The function $a(t,x,\mu) := (a^i(t,x,\mu)):\mathbb{R}_{+}\times \mathbb{R}^d\times \mathcal{M}(\mathbb{R}^d) \to \mathbb{S}^d$ is measurable. Moreover, there exists a positive constant $K_m$ such that for all $(t,\mu) \in \mathbb{R}_{+}\times \mathcal{M}(\mathbb{R}^d) $, $a(t,\cdot,\mu) \in C^m(\mathbb{R}^d;\mathbb{S}^d)$ and
		\begin{equation*}
		\sup_{t,\mu}\| a(t,\cdot,\mu)\|_{C^m} \leq K_m.
		\end{equation*}
		
		\item The function $b(t,x,\mu) := (b^i(t,x,\mu)):\mathbb{R}_{+}\times \mathbb{R}^d\times \mathcal{M}(\mathbb{R}^d) \to \mathbb{R}^{d}$ is measurable. Moreover, there exists a positive constant $K_m$ such that for all $(t,\mu) \in \mathbb{R}_{+}\times \mathcal{M}(\mathbb{R}^d) $, $b(t,\cdot,\mu) \in C^m(\mathbb{R}^d;\mathbb{R}^{d \times d_1})$ and
		\begin{equation*}
		\sup_{t,\mu}\| b(t,\cdot,\mu)\|_{C^m} \leq K_m.
		\end{equation*}
		
		\item\label{assumption:sigma nonlinear} The function $\sigma(t,x,\mu) := (\sigma^{i,k}(t,x,\mu)):\mathbb{R}_{+}\times \mathbb{R}^d\times \mathcal{M}(\mathbb{R}^d) \to \mathbb{R}^{d \times d_1}$ is measurable. Moreover, there exists a positive constant $K_m$ such that for all $(t,\mu) \in \mathbb{R}_{+}\times \mathcal{M}(\mathbb{R}^d) $, $\sigma(t,\cdot,\mu) \in C^m(\mathbb{R}^d;\mathbb{R}^{d \times d_1})$ and
		\begin{equation*}
		\sup_{t,\mu}\| \sigma(t,\cdot,\mu)\|_{C^m} \leq K_m.
		\end{equation*}
		
		\item \label{assumptions:nonlinear Lipschitz} (Uniform Lipschitz continuity) There exists a constant $K > 0$ such that, for each $t \in [0, T]$ and $(x,\mu^\prime), (x,\mu^\prime)\in \mathbb{R}^d \times \mathcal{M}_r(\mathbb{R}^d)$,
		\begin{align*}
		\Vert & a(t,x,\mu) - a(t,x^\prime,\mu^\prime) \Vert 
		+ \Vert \sigma(t,x,\mu) - \sigma(t,x^\prime,\mu^\prime) \Vert
		+ \vert b(t,x,\mu) - b(t,x^\prime,\mu^\prime) \vert \\
		& \leq K\left(  \vert x - x^\prime \vert + \rho(\mu, \mu^\prime)\right),
		\end{align*}
		
		for all $\mu, \mu^\prime \in \mathcal{M}_r(\mathbb{R}^d)$, $t\in [0,T]$ and $x, x^\prime \in \mathbb{R}^d$.
		
		\item \label{assumptions:nonlinear parabolicity} (Parabolicity) For each $(t,x,\mu) \in [0, T] \times \mathbb{R}^d \times \mathcal{M}_r(\mathbb{R}^d) $,
		\begin{equation*}
		[2a^{ij}(t,x,\mu) - \sigma^{ik}\sigma^{jk}(t,x,\mu)] \xi^i\xi^j \geq 0, \quad \forall \xi \in \mathbb{R}^d.
		\end{equation*}
	\end{enumerate}
\end{asm}

Let $(\Omega, \mathcal{F},( \mathcal{F}_t)_{t\geq 0}, \mathbb{P})$ be a filtered atomless probability space, compatible with a $d_1$-dimensional Brownian motion $W$  in the sense of Definition \ref{defn:filtration}.

Assume Assumption \ref{assumptions:nonlinear} with any $m = 0$.
\begin{defn}\label{definition of solution}
	We say that $\mu \in L^1_{\omega, t}\mathcal{M}_r$ is a solution to equation \eqref{target equation} with initial condition $\mu_0 \in \mathcal{M}_r(\mathbb{R}^d)$, if for every $\varphi \in C^2(\mathbb{R}^d)$ and $t\in[0,T]$, there exists a set of full measure $\Omega^\prime \subset \Omega$ on which the following integral equation is satisfied,
	\begin{align}
	\langle \mu_t, \varphi \rangle 
	= & \langle \mu_0, \varphi \rangle
	+ \int_{0}^{t} \langle \mu_s, a^{i,j}(s,\mu_s) \partial_{i,j}^2 \varphi\rangle ds
	+ \int_{0}^{t} \langle \mu_s, b^i(s,\mu_s) \partial_i \varphi \rangle ds\nonumber\\
	& + \int_{0}^{t} \langle \mu_s, \sigma^{i,k}(s,\mu_s) \partial_i \varphi \rangle d W^k_s. \label{ito weak formulation}
	\end{align}
\end{defn}

Under Assumption \ref{assumptions:nonlinear} all the integrals in the previous definition are well defined. Moreover, $\langle \mu_s, \sigma^{i,k}(t,\mu_s) \partial_i \varphi \rangle$ is $(\mathcal{W}_{t})_{t\geq0}$-adapted and $(\mathcal{B}(\mathbb{R}_{+})\times \mathcal{F})$-measurable which is enough for the It\^{o} integral to be defined (see Remark \ref{remark on adaptedness}).

\begin{theorem}\label{thm:existence spde}
	Let $r>0$ and $\mu_0 \in \mathcal{M}_r(\mathbb{R}^d)$. If Assumptions \ref{assumptions:nonlinear} are in force with $m=0$ and if $(X, \mu)$ is a solution to equation \eqref{sde system} in the sense of Definition \ref{definition solution sde} with any initial condition $X_0 :\Omega\rightarrow \mathbb{R}^d$ such that $\mu_0 = r\mathcal{L}(X_0)$, then $\mu =r\mathcal{L}(X \mid \mathcal{W})\in L^1_{\omega}C_t\mathcal{M}_r \subset L^1_{\omega, t}\mathcal{M}_r$ is a solution to equation \eqref{target equation} in the sense of Definition \ref{definition of solution}, with initial condition $\mu_0$.	
\end{theorem}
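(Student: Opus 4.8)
The plan is to obtain the weak formulation \eqref{ito weak formulation} by applying It\^o's formula to $\varphi(X_t)$ along the dynamics \eqref{sde system} and then conditioning on the common noise $\mathcal{W}$. The regularity $\mu = r\mathcal{L}(X\mid\mathcal{W}) \in L^1_\omega C_t\mathcal{M}_r$ is already provided by Theorem \ref{wellposedness sde}, so the only substantive task is to verify, for each fixed $\varphi \in C^2(\mathbb{R}^d)$ and $t \in [0,T]$, the integral identity of Definition \ref{definition of solution}. Since membership in $C^2(\mathbb{R}^d)$ entails bounded derivatives, the boundedness of the coefficients makes every term below integrable and every stochastic integrand a member of $L^2_{\omega,t}$.

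First I would apply It\^o's formula to $\varphi(X_t)$, writing $dX^i_s = b^i\,ds + \sigma^{ik}\,dW^k_s + \alpha^{il}\,dB^l_s$ with all coefficients evaluated at $(s,X_s,\mu_s)$. The decisive simplification is in the quadratic covariation: since $W$ and $B$ are independent the cross terms vanish and
\begin{equation*}
d\langle X^i, X^j\rangle_s = \bigl(\sigma^{ik}\sigma^{jk} + \alpha^{il}\alpha^{jl}\bigr)\,ds.
\end{equation*}
By the definition \eqref{definition alpha} of $\alpha$ as the symmetric square root of $2a - \sigma\sigma^T$, one has $\alpha^{il}\alpha^{jl} = (\alpha^2)^{ij} = 2a^{ij} - \sigma^{ik}\sigma^{jk}$, whence $d\langle X^i,X^j\rangle_s = 2a^{ij}\,ds$ and the second-order term collapses to $a^{ij}\partial^2_{ij}\varphi(X_s)\,ds$. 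This is exactly the purpose of the auxiliary noise $B$ with coefficient $\alpha$: it regenerates the full diffusion $2a$ out of the common-noise contribution $\sigma\sigma^T$. It\^o's formula then reads
\begin{equation*}
\varphi(X_t) = \varphi(X_0) + \int_0^t \bigl(a^{ij}\partial^2_{ij}\varphi + b^i\partial_i\varphi\bigr)(s,X_s,\mu_s)\,ds + \int_0^t \sigma^{ik}\partial_i\varphi\,dW^k_s + \int_0^t \alpha^{il}\partial_i\varphi\,dB^l_s.
\end{equation*}

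Next I would take $\mathbb{E}[\,\cdot\mid\mathcal{W}]$ of both sides and multiply by $r$, using $\langle \mu_t,\varphi\rangle = r\,\mathbb{E}[\varphi(X_t)\mid\mathcal{W}]$. Three interchanges are required: the ordinary time integrals commute with the conditional expectation by Fubini; the $dB$-martingale has vanishing conditional expectation because $B$ is independent of $W$; and the $dW$-integral commutes with $\mathbb{E}[\,\cdot\mid\mathcal{W}]$, producing $\int_0^t \mathbb{E}[\sigma^{ik}\partial_i\varphi(X_s)\mid\mathcal{W}]\,dW^k_s$. The last two are furnished by the conditional-integral statements of Lemma \ref{lemma:conditional integrals}, together with the compatibility of the filtration and the equivalence \eqref{conditional equivalence}. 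Finally, since $\mu_s$ is $\mathcal{W}_s$-measurable, the frozen function $\sigma^{ik}(s,\cdot,\mu_s)$ (and likewise $a^{ij}$ and $b^i$) is $\mathcal{W}$-measurable and may be pulled out of the conditional law, so that $\mathbb{E}[\sigma^{ik}(s,X_s,\mu_s)\partial_i\varphi(X_s)\mid\mathcal{W}] = \tfrac1r\langle \mu_s, \sigma^{ik}(s,\cdot,\mu_s)\partial_i\varphi\rangle$, and analogously for the drift and second-order terms. Collecting these identities yields precisely \eqref{ito weak formulation}.

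The main obstacle is the interchange of the conditional expectation with the It\^o integral against $W$: this is not the classical stochastic Fubini theorem, which conditions on the full $\sigma$-algebra, but rather the conditional version tied to the common-noise filtration $\mathcal{W}$, and it is exactly here that the compatibility of $(\mathcal{F}_t)$ in the sense of Definition \ref{defn:filtration} and the identity $\mathbb{E}[\,\cdot\mid\mathcal{W}] = \mathbb{E}[\,\cdot\mid\mathcal{W}_t]$ enter. Since Definition \ref{definition of solution} permits the exceptional null set to depend on $\varphi$ and $t$, no uniformity over test functions is needed, and the verification for each fixed $\varphi$ and $t$ suffices.
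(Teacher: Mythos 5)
Your argument is essentially the paper's own proof: apply It\^o's formula to $\varphi(X_t)$, observe that the independence of $W$ and $B$ and the identity $\alpha\alpha^T = 2a-\sigma\sigma^T$ collapse the second-order term to $a^{ij}\partial^2_{ij}\varphi$, then multiply by $r$ and condition on $\mathcal{W}$ using Lemma \ref{lemma:conditional integrals} to kill the $dB$-integral and commute the $dW$-integral, with the regularity $\mu\in L^1_\omega C_t\mathcal{M}_r$ supplied by Theorem \ref{wellposedness sde}. The proposal is correct and, if anything, spells out the interchange and measurability justifications more explicitly than the paper does.
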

%\begin{theorem}\label{thm:existence spde}
%	Let $r>0$ and $\mu_0 \in \mathcal{M}_r(\mathbb{R}^d)$. If Assumptions \ref{assumptions:nonlinear} are in force with $m=0$, then there exists a solution $\mu \in L^1_{\omega}C_t\mathcal{M}_r \subset L^1_{\omega, t}\mathcal{M}_r$ to equation \eqref{target equation} in the sense of Definition \ref{definition of solution}, with initial condition $\mu_0$.	
%	
%	Moreover, $\mu$ is the second component of the solution to \eqref{sde system} given by Theorem \ref{wellposedness sde}, with any initial condition $X_0 \in L^1(\Omega, \mathcal{F}_0; \mathbb{R}^d)$ such that $\mu_0 = r\mathcal{L}(X_0)$.
%	%Moreover, if $\mu_0 \in \mathcal{P}(\mathbb{R}^d)$, then $\mu\in L^1_{\omega, t}\mathcal{P}$.
%\end{theorem}
\begin{proof}
	Since the probability space $(\Omega, \mathcal{F}_0, \mathbb{P})$ is atomless, there exists an $\mathcal{F}_0$-measurable random variable $X_0 :\Omega \rightarrow \mathbb{R}^d$ such that $\mu_0 = r\mathcal{L}(X_0)$. 
	Assumption \ref{assumptions:nonlinear} imply Assumption \ref{assumptions:particles}, we can apply Theorem \ref{wellposedness sde} to get a solution $(X_t, \mu_t)$ to equation \eqref{sde system} with initial condition $X_0$. Using It\^{o}'s formula, we check that $\mu_t$ solves equation \eqref{target equation} in a distributional sense. We have
\begin{align*}
	\varphi(X_t) = &\varphi(X_0) 
	+ \int_{0}^{t} b^i(s,X_s, \mu_s) \partial_i\varphi(X_s) ds\\
	& + \int_{0}^{t} \frac12 \left[\alpha^{i,l}\alpha^{l,j} + \sigma^{i,k}\sigma^{k,j}\right](s,X_s, \mu_s)\partial^2_{i,j}\varphi(X_s) ds\\ 
	& + \int_{0}^{t} \sigma^{i,k}(s,X_s, \mu_s) \partial_i \varphi(X_s)dW^k_s\\
	& + \int_{0}^{t} \alpha^{i,j}(s,X_s, \mu_s) \partial_i \varphi(X_s) dB^j_s, \quad \mathbb{P}-a.s.
\end{align*}
By multiplying by $r$ and taking the conditional expectation with respect to $W$, we obtain equation \eqref{ito weak formulation}. This follows from the definition of $\alpha$ and Lemma \ref{lemma:conditional integrals}.

It follows from the definition of $\mu_t$ as solution of the McKean-Vlasov SDE and Theorem \ref{wellposedness sde} that $\mu \in L^1_{\omega}C_t\mathcal{M}_r$.
\end{proof}

We are ready to state the uniqueness result in the nonlinear case.

\begin{theorem}\label{thm:uniqueness spde}
	Let $r>0$ and $\mu_0 \in \mathcal{M}_r(\mathbb{R}^d)$.
	Let Assumptions \ref{assumptions:nonlinear} be satisfied with
	\begin{equation*}
	m > \frac{d}{2} + 2.
	\end{equation*}
	Then, the solution $\mu$ of equation \eqref{target equation} in the sense of Definition \ref{definition of solution} is unique and it is given by $\mu = (\mu_t)_{t\in [0,T]} := (r \mathcal{L}(X_t \mid \mathcal{W}))_{t\in [0,T]}$, where $X_t$ is a solution to equation \eqref{sde system} with initial condition $X_0 :\Omega \rightarrow \mathbb{R}^d)$ such that $\mu_0 = r\mathcal{L}(X_0)$.
\end{theorem}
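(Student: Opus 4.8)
The plan is to implement the freezing strategy sketched in the introduction: given any solution of the nonlinear equation \eqref{target equation}, I freeze the measure-dependence of the coefficients along that solution, recognize it as a solution of the linear SPDE \eqref{linear ito spde}, apply the linear uniqueness result to identify it with the conditional law of an explicit characteristic process, and then bootstrap this identification back into the McKean-Vlasov SDE \eqref{sde system}, where well-posedness is already available from Theorem \ref{wellposedness sde}.

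First I would fix an arbitrary solution $\mu \in L^1_{\omega,t}\mathcal{M}_r$ of \eqref{target equation} with initial datum $\mu_0$ and define the frozen coefficients $\bar a(t,x,\omega) := a(t,x,\mu_t(\omega))$, $\bar b(t,x,\omega) := b(t,x,\mu_t(\omega))$ and $\bar\sigma(t,x,\omega) := \sigma(t,x,\mu_t(\omega))$. Because $\mu$ is $(\mathcal{W}_t)_{t\geq0}$-adapted and the coefficients obey Assumption \ref{assumptions:nonlinear}, these frozen fields are $(\mathcal{W}_t)_{t\geq0}$-adapted and satisfy Assumption \ref{assumptions:linear} with the same $m$: the $C^m$-bounds transfer directly from Assumption \ref{assumptions:nonlinear}, the $x$-Lipschitz estimate follows by taking $\mu=\mu'$ in Assumption \ref{assumptions:nonlinear Lipschitz}, and parabolicity transfers verbatim from Assumption \ref{assumptions:nonlinear parabolicity}. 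Comparing the weak formulation \eqref{ito weak formulation} with \eqref{linear integral equation} shows that $\mu$ is then a solution of the linear equation \eqref{linear ito spde} with coefficients $\bar a,\bar b,\bar\sigma$ in the sense of Definition \ref{definition of linear solution}. Next, using that $(\Omega,\mathcal{F}_0,\mathbb{P})$ is atomless, I would pick an $\mathcal{F}_0$-measurable $X_0$ with $\mu_0 = r\mathcal{L}(X_0)$ and let $X$ be the unique strong solution of the linear SDE \eqref{linear sde system} driven by these frozen coefficients; by Lemma \ref{existence linear spde} the process $\nu_t := r\mathcal{L}(X_t\mid\mathcal{W})$ is also a solution of \eqref{linear ito spde} with these coefficients and the same initial datum $\mu_0$. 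Since $m > \tfrac{d}{2}+2$ is equivalent to $2(m-2)>d$, Theorem \ref{wellposedness linear spde} applies and forces $\mu=\nu$, i.e. $\mu_t = r\mathcal{L}(X_t\mid\mathcal{W})$ for all $t$.

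The heart of the argument is the self-consistency that this identity produces: the coefficients of the linear SDE solved by $X$ are $a(t,x,\mu_t)=a(t,x,r\mathcal{L}(X_t\mid\mathcal{W}))$, and likewise for $b$ and $\sigma$, so $X$ in fact solves the full McKean-Vlasov SDE \eqref{sde system}, and $(X,\mu)$ is a solution in the sense of Definition \ref{definition solution sde}. Uniqueness then follows by comparison at the level of the characteristics: given two solutions $\mu^1,\mu^2$ of \eqref{target equation} with the same $\mu_0$, I would carry out the construction above for both while using one and the same $X_0$ (legitimate, since only $r\mathcal{L}(X_0)=\mu_0$ is required), obtaining processes $X^1,X^2$ that both solve \eqref{sde system} with this common initial condition and satisfy $\mu^i=r\mathcal{L}(X^i\mid\mathcal{W})$. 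Theorem \ref{wellposedness sde} then yields $(X^1,\mu^1)=(X^2,\mu^2)$, whence $\mu^1=\mu^2$, and the representation $\mu=r\mathcal{L}(X\mid\mathcal{W})$ is exactly what the construction delivers. The main obstacle I anticipate is the apparent circularity of freezing the coefficients along $\mu$ before knowing that $\mu$ is a conditional law; this is broken precisely by the linear uniqueness theorem. A secondary point requiring care is to employ a common $X_0$ in the final comparison, so that the invocation of McKean-Vlasov uniqueness does not implicitly depend on the law-independence recorded in Remark \ref{rem:law indep initial condition}, which is itself a consequence of the present theorem.
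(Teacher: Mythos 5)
Your proposal is correct and follows essentially the same route as the paper: freeze the coefficients along the given solution, verify Assumption \ref{assumptions:linear}, invoke the linear uniqueness of Theorem \ref{wellposedness linear spde} together with Lemma \ref{existence linear spde} to identify $\mu_t = r\mathcal{L}(X_t\mid\mathcal{W})$, observe the resulting self-consistency makes $(X,\mu)$ a solution of \eqref{sde system}, and conclude by the McKean--Vlasov uniqueness of Theorem \ref{wellposedness sde}. Your explicit remark that the two constructions must share a common $X_0$ before invoking Theorem \ref{wellposedness sde} is a point the paper leaves implicit, and it is the right way to avoid relying on Remark \ref{rem:law indep initial condition}.
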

%\begin{theorem}\label{thm:uniqueness spde}
%	Let $r>0$ and $\mu_0 \in \mathcal{M}_r(\mathbb{R}^d)$.
%	Let Assumptions \ref{assumptions:nonlinear} be satisfied with
%	\begin{equation*}
%	m > \frac{d}{2} + 2.
%	\end{equation*}
%	Then,
%	\begin{enumerate}
%		\item The solution $\mu$ of equation \eqref{target equation} in the sense of Definition \ref{definition of solution} is unique and it is given by $\mu = (\mu_t)_{t\in [0,T]} := (r \mathcal{L}(X_t \mid \mathcal{W}))_{t\in [0,T]}$, where $X_t$ is a solution to equation \eqref{sde system} with initial condition $X_0 \in L^1(\Omega, \mathcal{F}_0; \mathbb{R}^d)$ such that $\mu_0 = r\mathcal{L}(X_0)$.
%		\item Let $\mu^1_0, \mu^2_0 \in \mathcal{M}_r(\mathbb{R}^d)$. %and $X^i_0 \in L^1(\Omega)$ with $\langle \mu^i_0, \varphi\rangle = r\mathbb{E}[\varphi(X^i_0)]$, for $i=1,2$. 
%		The two solutions $\mu^1_t, \mu^2_t$ associated with initial conditions $\mu^1_0, \mu^2_0$ are in $L^1_{\omega}C_t\mathcal{M}_r$ and we have the following stability estimate
%		\begin{equation*}
%				\mathbb{E}\sup_{t\in[0,T]}\rho(\mu^1_t, \mu^2_t) \leq C \rho(\mu^1_0, \mu^2_0),
%		\end{equation*}
%		for some constant $C > 0$.
%	\end{enumerate} 
%\end{theorem}

\begin{proof}
Let $\mu \in L^1_{\omega,t}\mathcal{M}_r$ be a solution to equation \eqref{target equation}, and set $\bar a(t,x) := a(t, x, \mu_t)$, $\bar \sigma(t, x) := \sigma(t, x, \mu_t)$ and $\bar b(t, x) := b(t, x, \mu_t)$. We have that $\bar a, \bar\sigma$ and $\bar b$ are $\mathcal{B}(\mathbb{R}_{+})\times \mathcal{B}(\mathbb{R}^d)\times \mathcal{F}$-measurable and $(\mathcal{F}_t)_{t\in [0,T]}$-adapted processes. It follows from Assumption \ref{assumptions:nonlinear} that Assumption \ref{assumptions:linear} is satisfied by $\bar a, \bar b, \bar \sigma$. % Hence, Theorem \ref{wellposedness linear spde} can be applied and equation \eqref{linear sde system} admits a solution.

Let $X_0 :\Omega \rightarrow \mathbb{R}^d$ be an $\mathcal{F}_0$-measurable random variable such that $\mu_0 = r\mathcal{L}(X_0)$.
Let $X :[0,T]\times \Omega \rightarrow \mathbb{R}^d$ be a time-continuous, $(\mathcal{F}_t)_{t\geq 0}$-adapted solution to equation \eqref{linear sde system} with coefficients given by 
$
(\bar b, \bar \sigma, [2\bar a + (\bar \sigma)^T\bar \sigma]^\frac12)
$ and initial condition $X_0$.
%and let $\mu^2 \in L^1_{\omega}C_t\mathcal{M}_r \subset L^1_{\omega, t}\mathcal{M}_r$ be the solution to the linear equation \eqref{linear ito spde}, with coefficients $b, \sigma, a := a^1$, given by Lemma \ref{existence linear spde}. This solution is characterized as $\langle \mu^2_t, \varphi \rangle = r\mathbb{E}[\varphi(X^2_t) \mid \mathcal{W}]$, for each $\varphi \in C^0(\mathbb{R}^d)$.

Clearly, $\mu$ is also a solution to the linear equation \eqref{linear ito spde} with coefficients $(\bar b, \bar \sigma, \bar a)$. Since $(\bar b, \bar \sigma, \bar a)$ satisfy Assumption \ref{assumptions:linear}, Theorem \ref{wellposedness linear spde} implies the uniqueness of solutions for the linear equation \eqref{linear ito spde} in $L^1_{\omega, t}\mathcal{M}_r$, which implies that $\mu$ corresponds to the solution given by Lemma \ref{existence linear spde}, that is, $\mu = r\mathcal{L}(X\mid \mathcal{W}) \in L^1_{\omega}C_t\mathcal{M}_r$.

This implies that the couple $(X, \mu)$ is a solution to the equation \eqref{sde system} in the sense of Definition \ref{definition solution sde}.

Hence, the solutions of equation \eqref{target equation} are characterized as solutions of the McKean-Vlasov equation \eqref{sde system}, in the sense that, $\mu \in L^1_{\omega,t}\mathcal{M}_r$ is a solution to \eqref{target equation} in the sense of Definition \ref{definition of solution}, if and only if there exists an $(\mathcal{F}_t)_{t\geq0}$-adapted stochastic process $(X_t)_{t\in[0,T]}$, such that $\mu_t = r\mathcal{L}(X_t \mid \mathcal{W})$ and the pair $(X, \mu)$ is a solution to the McKean-Vlasov equation \eqref{sde system} in the sense of Definition \ref{definition solution sde}. 

The uniqueness of solutions to \eqref{target equation} now follows from the uniqueness of solutions to the McKean-Vlasov SDE proven in Theorem \ref{wellposedness sde}.

\end{proof}

\appendix
\section{From Stochastic Scalar Conservation Laws to non-linear stochastic Fokker-Planck equations} \label{appendix:ito strato}
There is a rigorous way to rely the SSCL \eqref{SSCL} to the Fokker-Planck equation \eqref{target equation} using the concept of the Lions derivative in the space $\mathcal{P}_2(\mathbb{R}^d) := \left\{ \mu\in\mathcal{P}(\mathbb{R}^d) \mid \langle\mu,\vert \cdot \vert^2\rangle <+\infty \right\}$ of probability measures with finite second moment, endowed with the $2$-Wasserstein distance. 
The results in this section are taken from \cite{MR3752669, MR3753660}.
Let $(\Omega, \mathcal{F},( \mathcal{F}_t)_{t\geq 0}, \mathbb{P})$ be an atomless filtered probability space, compatible with a $d_1$-dimensional Brownian motion $W$.
We study the following non-local scalar conservation law
\begin{equation}
\label{NLSSCL}
d\mu_t + \operatorname{div}(\mu_t \sigma(x, \mu_t) \circ dW_t) = 0,
\end{equation}
in the following sense
\begin{defn}\label{defn:sol nlsscl}
	We say that a stochastic process $\mu : [0,T] \times \Omega \to \mathcal{P}_2(\mathbb{R}^d)$ is a solution to equation \eqref{NLSSCL} with initial condition $\mu_0 \in \mathcal{P}_2(\mathbb{R}^d)$, if for every $\varphi \in C^2(\mathbb{R}^d)$ the following conditions are satisfied:
	\begin{enumerate}
		\item The process $(\langle \mu_t, \sigma^{i,k}(\mu_t) \partial_i \varphi \rangle)_{t\in[0,T]}$ is an $(\mathcal{W}_t)_{t\geq 0}$-adapted semimartingale;
		\item For Lebesgue-a.e. $t\in[0,T]$ there exists a set of full measure $\Omega^\prime \subset \Omega$ on which the following integral equation is satisfied
		\begin{equation}
		\label{integral nlsscl}
		\langle \mu_t, \varphi \rangle 
		=  \langle \mu_0, \varphi \rangle
		+ \int_{0}^{t} \langle \mu_s, \sigma^{i,k}(\mu_s) \partial_i \varphi \rangle \circ d W^k_s. 
		\end{equation}
	\end{enumerate}
\end{defn}

\begin{asm}
	\label{assumptions:appendix}
	Let $m \in \mathbb{N}$ and assume
	\begin{enumerate}[label=(\roman*), ref=\ref{assumptions:nonlinear} (\roman*)]
		%[label=\textbf{C.\arabic*}]
		\item\label{assumption:sigma Lions appendix} (Lions-differentiability) The function $\sigma(x,\mu) := (\sigma^{i,k}(x,\mu)): \mathbb{R}^d\times \mathcal{P}(\mathbb{R}^d) \to \mathbb{R}^{d \times d_1}$ is measurable and satisfies Assumptions \cite[(Joint Chain Rule Common Noise), p.279]{MR3753660}. In particular, $\sigma$ is twice Lions differentiable in the $\mu$ direction with first derivative $\partial_{\mu}\sigma:  \mathbb{R}^d\times \mathbb{R}^d \times\mathcal{P}(\mathbb{R}^d) \to \mathbb{R}^{d\times d \times d_1}$.
		
		\item\label{assumption:sigma appendix} There exists a positive constant $K_m$ such that for all $\mu \in \mathcal{P}_2(\mathbb{R}^d) $, $\sigma(\cdot,\mu) \in C^{m+1}(\mathbb{R}^d;\mathbb{R}^{d \times d_1})$, $\partial_{\mu}\sigma(\cdot, \cdot,\mu) \in C^{m}(\mathbb{R}^d\times \mathbb{R}^d;\mathbb{R}^{d\times d \times d_1})$  and
		\begin{equation*}
		\sup_{\mu}\| \sigma(\cdot,\mu)\|_{C^{m+1}},
		\;
		\sup_{\mu}\| \partial_{\mu}\sigma(\cdot,\mu)\|_{C^{m}}
		\leq
		K_m.
		\end{equation*}
		
		\item \label{assumptions:appendix Lipschitz} (Uniform Lipschitz continuity) There exists a constant $K > 0$ such that, for all $(x,\mu^\prime), (x,\mu^\prime)\in \mathbb{R}^d \times \mathcal{M}_r(\mathbb{R}^d)$,
		\begin{align*}
		\Vert \sigma(x,\mu) - \sigma(x^\prime,\mu^\prime) \Vert
		\leq K\left(  \vert x - x^\prime \vert + \rho(\mu, \mu^\prime)\right),
		\end{align*}
	\end{enumerate}
\end{asm}

\begin{pro}
	Let $\mu_0 \in \mathcal{P}_2(\mathbb{R}^d)$ and assume that $\sigma$ satisfies Assumption \ref{assumptions:appendix} for some $m > \frac{d}{2} + 2$ and $\sigma$ is independent of the time variable. Then, there exists a $\mathcal{P}_2(\mathbb{R}^d)$-valued solution $\mu$ to equation \eqref{NLSSCL} in the sense of Definition \ref{defn:sol nlsscl}.
	
	Moreover, $\mu$ is the solution to equation \eqref{target equation} with initial condition $\mu_0$ and coefficients $b$ and $\alpha$ given by 
	\begin{equation}
	\label{lions:drift definition}
	b(x, \mu) = (b(x, \mu)^{i}):= \frac12 \sigma^{jk}(t,x, \mu)\partial_j\sigma^{ik}(x, \mu) + \frac 12 G^i(x,\mu),
	\end{equation}
	\begin{equation}
	\label{lions:diffusion definition}
	a (x, \mu) = (a^{ij}(x, \mu)):= \frac12 \sigma^{ik}(x,\mu)\sigma^{jk}(x,\mu),
	\end{equation}
	with $G^i(x,\mu) := \langle \mu, \sigma^{jk}(\cdot, \mu)(\partial_{\mu} \sigma(x, \mu)(\cdot))^{ijk} \rangle$.
\end{pro}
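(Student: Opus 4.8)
The plan is to build $\mu$ as the conditional law of the McKean--Vlasov characteristics of Section \ref{section: McKean}, to read off from Theorem \ref{thm:existence spde} that it solves the It\^o Fokker--Planck equation \eqref{target equation}, and then to perform an It\^o--Stratonovich conversion at the level of the weak formulation, tracking the non-local correction through the Lions derivative of $\sigma$.

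First I would check that $a,b,\sigma$ defined by \eqref{lions:drift definition}--\eqref{lions:diffusion definition} satisfy Assumption \ref{assumptions:nonlinear} with the same $m>\frac d2+2$. Since $a=\frac12\sigma\sigma^{T}$ is symmetric non-negative and $2a-\sigma^{T}\sigma\equiv0$, parabolicity \ref{assumptions:nonlinear parabolicity} is automatic and $\alpha:=[2a-\sigma^{T}\sigma]^{1/2}\equiv0$; the $C^{m}$-bounds on $a$, on the local drift $\frac12\sigma^{jk}\partial_j\sigma^{ik}$, and on $G$ all follow from Assumption \ref{assumption:sigma appendix}, each product losing one derivative, while $G^{i}(\cdot,\mu)\in C^{m}$ uniformly in $\mu$ because $\partial_\mu\sigma(\cdot,\cdot,\mu)\in C^{m}$ and $\sigma$ is bounded; the Lipschitz dependence \ref{assumptions:nonlinear Lipschitz} on $\mu$ is likewise controlled by the bounds on the Lions derivatives. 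With Assumption \ref{assumptions:nonlinear} in force and $r=1$ (as $\mu_0\in\mathcal P_2\subset\mathcal M_1$), Theorem \ref{wellposedness sde} produces characteristics solving $dX_t=b(X_t,\mu_t)\,dt+\sigma(X_t,\mu_t)\,dW_t$ with $\mu_t=\mathcal L(X_t\mid\mathcal W)$ and no $dB$-term (since $\alpha\equiv0$), and Theorem \ref{thm:existence spde} shows $\mu=\mathcal L(X\mid\mathcal W)$ solves \eqref{target equation} in the sense of Definition \ref{definition of solution}. A standard second-moment estimate on this SDE, using $\mu_0\in\mathcal P_2$ and boundedness of the coefficients, keeps $\mu_t\in\mathcal P_2(\mathbb R^d)$, so the Wasserstein/Lions calculus is available along the flow.

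It then remains to show this solution also satisfies the Stratonovich equation \eqref{integral nlsscl}, for which it suffices to identify the It\^o--Stratonovich correction of $\int_0^t\langle\mu_s,\sigma^{ik}(\mu_s)\partial_i\varphi\rangle\circ dW^k_s$ with the drift of \eqref{ito weak formulation}. Writing $Y^{k}_s:=F_k(\mu_s)$ with $F_k(\nu):=\langle\nu,\sigma^{i,k}(\cdot,\nu)\partial_i\varphi\rangle$, the correction is $\frac12\sum_k d[Y^k,W^k]_s$, so I would compute the $dW^{l}$-martingale part of $dF_k(\mu_s)$. Applying the chain rule for functionals of a conditional measure flow with common noise---exactly the Joint Chain Rule invoked in Assumption \ref{assumption:sigma Lions appendix} (see \cite{MR3753660})---this martingale part equals $\langle\mu_s,\sigma^{j,l}(\cdot,\mu_s)[\partial_\mu F_k(\mu_s)(\cdot)]^{j}\rangle\,dW^{l}_s$, and the Lions derivative $[\partial_\mu F_k(\nu)(y)]^{j}$ splits into the local term $\partial_{y_j}\!\big(\sigma^{i,k}(y,\nu)\partial_i\varphi(y)\big)$ and the non-local term $\langle\nu,(\partial_\mu\sigma(\cdot,y,\nu))^{ijk}\partial_i\varphi\rangle$. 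Setting $l=k$, summing and multiplying by $\frac12$, the local term produces $\langle\mu_s,\tfrac12\sigma^{jk}\sigma^{ik}\partial^2_{ij}\varphi\rangle+\langle\mu_s,\tfrac12\sigma^{jk}\partial_j\sigma^{ik}\partial_i\varphi\rangle$, i.e. $\langle\mu_s,a^{ij}\partial^2_{ij}\varphi\rangle$ together with the first half of $b$, while the non-local term produces $\langle\mu_s,\tfrac12 G^{i}\partial_i\varphi\rangle$, the second half of $b$ in \eqref{lions:drift definition}. Hence the correction equals $\int_0^t\langle\mu_s,a^{ij}\partial^2_{ij}\varphi+b^i\partial_i\varphi\rangle\,ds$ and \eqref{integral nlsscl} reduces precisely to \eqref{ito weak formulation}.

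The main obstacle is this last step: rigorously identifying the martingale part of the measure-valued semimartingale $Y^k$, and in particular the emergence of $\partial_\mu\sigma$ (hence of $G$) through the $\mu$-dependence of $\sigma$. This is where Assumption \ref{assumption:sigma Lions appendix} and the Carmona--Delarue common-noise chain rule are essential, and where the threshold $m>\frac d2+2$ enters via Sobolev embedding, ensuring that $\sigma$ and $\partial_\mu\sigma$ are smooth enough for the chain rule and the quadratic-covariation computation to be justified. The remaining points---the adaptedness and semimartingale property required in Definition \ref{defn:sol nlsscl}(1), and the continuity of $t\mapsto\langle\mu_t,\varphi\rangle$---are routine consequences of the regularity and of the representation $\mu=\mathcal L(X\mid\mathcal W)$.
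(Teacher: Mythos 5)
Your proposal is correct and follows essentially the same route as the paper: construct $\mu$ as the conditional law of the McKean--Vlasov characteristics (with $\alpha\equiv 0$ since $2a=\sigma^T\sigma$), invoke the well-posedness results of Sections \ref{section: McKean} and \ref{section: main results} to get the It\^o formulation \eqref{ito weak formulation}, and then identify the It\^o--Stratonovich correction by applying the Carmona--Delarue common-noise chain rule to the functional $u^k(\nu)=\langle\nu,\sigma^{i,k}(\cdot,\nu)\partial_i\varphi\rangle$, whose Lions derivative splits into the local terms yielding $a^{ij}\partial^2_{ij}\varphi+\tfrac12\sigma^{jk}\partial_j\sigma^{ik}\partial_i\varphi$ and the non-local term yielding $\tfrac12 G^i\partial_i\varphi$. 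Your functional $F_k$ is exactly the paper's $u^k$, and your quadratic-covariation computation matches the paper's use of \cite[eq.\ (4.28)]{MR3753660} and \cite[eq.\ (5.37)]{MR3752669}.
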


\begin{rem}
	Under Assumption \ref{assumptions:appendix}, with $m\geq 1$, and thanks to \cite[Remark 5.27]{MR3752669} the functions $b,\sigma,a$ satisfy Assumption \ref{assumptions:nonlinear}, with the same $m$.
\end{rem}

\begin{proof}
Given a random variable $X_0 \in L^2(\Omega, \mathcal{F}_0; \mathbb{R}^d)$ with $\mu_0 = \mathcal{L}(X_0)$ consider the following McKean-Vlasov SDE
\begin{equation}
\label{lions:sde}
\left\{
\begin{array}{l}
dX_t = b(X_t,\mu_t)dt + \sigma(X_t,\mu_t) dW_t, \\
X_t|_{t=0} = X_0,\\
\mu_t  := \mathcal{L}(X_t \mid \mathcal{W}).
\end{array}
\right.
\end{equation}

The function $\sigma: \mathbb{R}^d \times \mathcal{P}(\mathbb{R}^d) \to \mathbb{R}^{d\times d_1}$ is given. The coefficient $b: \mathbb{R}^d \times \mathcal{P}(\mathbb{R}^d) \to \mathbb{R}^{d}$ is constructed from $\sigma$ by \eqref{lions:drift definition}.
%The coefficients $\alpha: \mathbb{R}^d \times \mathcal{P}(\mathbb{R}^d) \to \mathbb{R}^{d\times d}$ and $b: \mathbb{R}^d \times \mathcal{P}(\mathbb{R}^d) \to \mathbb{R}^{d}$ are constructed from $\sigma$ by \eqref{lions:drift definition} and \eqref{lions:diffusion definition}. 
Since $(b,\sigma, a)$, with $a$ given by \eqref{lions:diffusion definition}, satisfy Assumption \ref{assumptions:nonlinear}, equation \eqref{lions:sde} is a special case of equation \eqref{sde system}, with $\alpha := (2a - \sigma^T\sigma)^{\frac{1}{2}} = 0$.

If $(X, \mu_t  := \mathcal{L}(X_t \mid \mathcal{W}))$ is the solution to the McKean-Vlasov equation \eqref{lions:sde}, given by Theorem \ref{wellposedness sde} with initial condition $X_0 \in L^2(\Omega, \mathcal{F}_0; \mathbb{R}^d)$, then it is easy to verify that $X_t \in L^2(\Omega, \mathcal{F}_t; \mathbb{R}^d)$. Hence, $\mu_t(\omega) \in \mathcal{P}_2(\mathbb{R}^d)$, for $t\in[0,T]$ and a.e.-$\omega$.

Given a test function $\varphi \in C^2(\mathbb{R}^d)$ and $\nu \in \mathcal{P}_2(\mathbb{R}^d)$, define 
\begin{equation}
\label{defn: u}
u(\nu) = (u^k(\nu))_{k=1,\dots,d_1} := \langle \nu, \sigma(\cdot, \nu)^{i,k}\partial_i \varphi(\cdot) \rangle.
\end{equation}
It follows from Assumption \ref{assumptions:appendix} that $u$ satisfies Assumption \cite[(Simple $C^2$ Regularity), p.268]{MR3753660} and we can apply \cite[equation (4.28)]{MR3753660}. We have
\begin{equation}
\label{lions:ito for u}
u^k(\mu_t) = u^k(\mu_0) 
+ \int_{0}^{t}\langle \mu_s, \sigma^{jl}(\cdot, \mu_s) (\partial_{\mu}u(\mu_s)(\cdot))^{jk} \rangle dW^l_s
+ r^k(t),
\quad \mathbb{P}-a.s.
\end{equation}
where $r(t)$ is a process of bounded variation.
%\begin{align*}
%u(t, x, \mu_t) = & u(0, x, \mu_0) 
%+ \int_{0}^{t}\partial_t u(s, x, \mu_s)ds
%+ \int_{0}^{t}\tilde{\mathbb{E}}^1\left[ \partial_{\mu}u(s, x , \mu_s) (\tilde X_s) \tilde b_s \right] ds\\
%& + \int_{0}^{t}\tilde{\mathbb{E}}^1\left[ \tilde{\sigma}_s^T \partial_{\mu}u(s, x, \mu_s)(\tilde{X}_s) \right] \cdot dW_s
%+ \frac12 \int_{0}^{t} \tilde{\mathbb{E}}^1\left[  \operatorname{trace}\left( \partial_v \partial_{\mu} u(s, x, \mu_s)(\tilde{X}_s) \tilde{\alpha}_s\tilde{\alpha}_s^T\right) \right] ds \\
%& + \frac12 \int_{0}^{t} \tilde{\mathbb{E}}^1\left[  \operatorname{trace}\left( \partial_v \partial_{\mu} u(s, x, \mu_s)(\tilde{X}_s) \tilde{\sigma}_s\tilde{\sigma}_s^T\right) \right] ds \\
%& + \frac12 \int_{0}^{t} \tilde{\mathbb{E}}^1\tilde{\tilde{\mathbb{E}}}^1\left[ \operatorname{trace}\left( \partial_{\mu}^2 u(s, x, \mu_s)(\tilde{X}_s, \tilde{\tilde{X}}_s) \tilde{\sigma}_s\tilde{\tilde{\sigma}}_s^T\right) \right] ds.
%\end{align*}

It is shown in \cite[equation (5.37)]{MR3752669} that
\begin{align}
	(\partial_{\mu}u(\nu)(\cdot))^{jk}
	= & \partial_j\sigma^{ik}(\cdot, \nu)\partial_i\varphi(\cdot) 
	+ \sigma^{ik} (\cdot, \nu)\partial_{i,j}^2\varphi(\cdot)\nonumber\\
	& + \int_{\mathbb{R}^d} (\partial_\mu \sigma(x, \nu)(\cdot))^{ijk} \partial_i \varphi (\cdot) d\nu(x), \quad \forall \nu \in \mathcal{P}_2(\mathbb{R}^d). \label{lions:measure derivative for u}
\end{align} 
%Define the coefficients of \eqref{lions:sde} as
%\begin{equation}
%\label{lions:drift definition}
%b(x, \mu) = (b(x, \mu)^{i}):= \frac12 \sigma^{jk}(t,x, \mu)\partial_j\sigma^{ik}(x, \mu) + \frac 12 G^i(x,\mu),
%\end{equation}
%\begin{equation}
%\label{lions:diffusion definition}
%\alpha (x, \mu) = (\alpha^{ij}(x, \mu)):= \frac12 \sigma^{ik}(x,\mu)\sigma^{jk}(x,\mu),
%\end{equation}
%with $G^i(x,\mu) := \sigma^{jk}(x, \mu)\langle \mu, (\partial_{\mu} \sigma(\cdot, \mu)(x))^{ijk} \rangle$. These coefficients satisfy Assumption \ref{assumptions:nonlinear}.
Due to Theorem \ref{thm:uniqueness spde}, $\mu$ solves equation \eqref{target equation}, in the sense of Definition \ref{definition of solution}. Rewriting the weak formulation yields, $\forall [0, T]$,
\begin{align*}
\langle \mu_t, \varphi \rangle 
= & \langle \mu_0, \varphi \rangle
+ \int_{0}^{t} \langle \mu_s, a^{i,j}(\mu_s) \partial_{i,j}^2 \varphi\rangle ds
+ \int_{0}^{t} \langle \mu_s, b^i(\mu_s) \partial_i \varphi \rangle ds\nonumber\\
& + \int_{0}^{t} \langle \mu_s, \sigma^{i,k}(\mu_s) \partial_i \varphi \rangle d W^k_s,
\quad \mathbb{P}-a.s.
\end{align*}
We rewrite the It\^o integral in terms of a Stratonovich integral. Computing the quadratic covariation between $u(\mu)$ and $W$ ($u$ is defined in \eqref{defn: u}), it follows from \eqref{lions:ito for u} and \eqref{lions:measure derivative for u} that the correction term is
\begin{equation*}
\frac{1}{2}\left[ u(\mu), W \right]_t =
-\frac12 \langle \mu_t, \sigma^{ik}(\mu_t) \sigma^{jk}(\mu_t)\partial_{ij}\varphi  \rangle 
- \frac12 \langle \mu_t, \sigma^{jk}(\mu_t)\partial_j\sigma^{ik}(\mu_t)\partial_i\varphi  \rangle 
- \frac12 \langle \mu_t, G^i(\mu_t) \partial_i \varphi \rangle.
\end{equation*}
By the definition of $b$ and $a$ and cancellations we obtain that $\mu$ satisfies \eqref{integral nlsscl}.
\end{proof}

\section{Remarks on conditional expectation}
Let $(\Omega, \mathcal{A}, \mathbb{P})$ be a fixed probability space.
\begin{lem}
	\label{appendix lemma:conditioning}
	If $\mathcal{F}, \mathcal{G}, \mathcal{H} \subset \mathcal{A}$ are three independent $\sigma$-algebras, and $X$ is a random variable measurable with respect to $\mathcal{F} \vee \mathcal{G}$, then
	\begin{equation*}
	\mathbb{E}\left[ X \mid \mathcal{F} \vee \mathcal{H} \right] = \mathbb{E}\left[ X \mid \mathcal{F}\right], \quad \mathbb{P}-a.s.
	\end{equation*}
\end{lem}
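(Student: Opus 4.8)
The plan is to show that $Y:=\mathbb{E}[X\mid\mathcal{F}]$ already satisfies the two defining properties of $\mathbb{E}[X\mid\mathcal{F}\vee\mathcal{H}]$. Measurability is immediate: $Y$ is $\mathcal{F}$-measurable, hence a fortiori $(\mathcal{F}\vee\mathcal{H})$-measurable, and it is integrable. It therefore remains to verify the testing identity $\mathbb{E}[X\mathbf{1}_A]=\mathbb{E}[Y\mathbf{1}_A]$ for every $A$ in a $\pi$-system generating $\mathcal{F}\vee\mathcal{H}$; by the uniqueness part of the definition of conditional expectation (equivalently, Dynkin's $\pi$--$\lambda$ theorem applied to the two finite signed measures $A\mapsto\mathbb{E}[X\mathbf{1}_A]$ and $A\mapsto\mathbb{E}[Y\mathbf{1}_A]$ on $\mathcal{F}\vee\mathcal{H}$), agreement on a generating $\pi$-system containing $\Omega$ forces the claimed a.s.\ equality.

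First I would fix the convenient $\pi$-system $\mathcal{C}:=\{F\cap H:F\in\mathcal{F},\,H\in\mathcal{H}\}$. It is closed under finite intersections, since $(F_1\cap H_1)\cap(F_2\cap H_2)=(F_1\cap F_2)\cap(H_1\cap H_2)$, it contains $\Omega=\Omega\cap\Omega$, and it generates $\mathcal{F}\vee\mathcal{H}$. Next, for $A=F\cap H$ with $F\in\mathcal{F}$ and $H\in\mathcal{H}$, I would exploit joint independence. Since $\mathcal{F},\mathcal{G},\mathcal{H}$ are mutually independent, the $\sigma$-algebra $\mathcal{F}\vee\mathcal{G}$ is independent of $\mathcal{H}$. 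The random variable $X\mathbf{1}_F$ is $(\mathcal{F}\vee\mathcal{G})$-measurable, hence independent of $\mathbf{1}_H$, giving $\mathbb{E}[X\mathbf{1}_{F\cap H}]=\mathbb{E}[X\mathbf{1}_F]\,\mathbb{P}(H)$. Likewise $Y\mathbf{1}_F$ is $\mathcal{F}$-measurable and thus independent of $\mathbf{1}_H$, whence $\mathbb{E}[Y\mathbf{1}_{F\cap H}]=\mathbb{E}[Y\mathbf{1}_F]\,\mathbb{P}(H)$. Finally the defining property of $Y=\mathbb{E}[X\mid\mathcal{F}]$, tested against $F\in\mathcal{F}$, yields $\mathbb{E}[Y\mathbf{1}_F]=\mathbb{E}[X\mathbf{1}_F]$, so the two expressions coincide for every $A\in\mathcal{C}$, and the conclusion follows.

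The only genuine subtlety, and the point I would be careful about, is that \emph{three independent $\sigma$-algebras} must be read as joint (mutual) independence rather than mere pairwise independence: the step $\mathcal{F}\vee\mathcal{G}\perp\mathcal{H}$ can fail under pairwise independence alone, and it is precisely this joint independence that allows $X$, which a priori sees both $\mathcal{F}$ and $\mathcal{G}$, to be decoupled from $\mathcal{H}$. All remaining steps are routine measure-theoretic bookkeeping, under the standing assumption that $X\in L^1$ so that the conditional expectations are defined.
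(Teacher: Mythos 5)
Your argument is correct and is essentially the paper's own proof: both verify the defining identity of $\mathbb{E}[X\mid\mathcal{F}\vee\mathcal{H}]$ for $\mathbb{E}[X\mid\mathcal{F}]$ by testing against the generating $\pi$-system of sets $F\cap H$ and factoring out $\mathbf{1}_H$ via the joint independence of $\mathcal{F}\vee\mathcal{G}$ and $\mathcal{H}$. You merely spell out the $\pi$--$\lambda$ closing step and the mutual-versus-pairwise independence caveat, which the paper leaves implicit.
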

\begin{proof}
	The $\sigma$-algebra $\mathcal{F} \vee \mathcal{G}$ is generated by the sets of the form $F \cap H$, with $F \in \mathcal{F}$ and $H\in\mathcal{H}$. Hence, the following computation concludes the proof
	\begin{equation*}
	\mathbb{E}\left[ \mathbb{E}\left[ X \mid \mathcal{F}\right] \boldsymbol{1}_{F\cap H}\right]
	= \mathbb{E}\left[ \mathbb{E}\left[ X \boldsymbol{1}_{F}\mid \mathcal{F}\right] \boldsymbol{1}_H\right]
	= \mathbb{E}\left[ X \boldsymbol{1}_{F}\right] \mathbb{E}\left[\boldsymbol{1}_H\right]
	= \mathbb{E}\left[ X \boldsymbol{1}_{F}\boldsymbol{1}_H\right].
	\end{equation*}
\end{proof}

\begin{lem}
	\label{lemma:conditional integrals}
	Let $(\Omega, \mathcal{F},(\mathcal{F}_t)_{t\geq0}, \mathbb{P})$ be a filtered probability space compatible with a Brownian motion $W$ and let $B$ be another $(\mathcal{F}_t)_{t\geq 0}$-adapted Brownian motion, which is independent from $W$. Let $(Y_t)_{t\geq 0}$ be an $(\mathcal{F}_t)_{t\geq 0}$-adapted bounded stochastic process. Then, $\forall t \in [0,T]$, $\mathbb{P}-a.s.$,
	\begin{equation*}
	\mathbb{E}\left[\int_{0}^{t} Y_s dB_s \vert \mathcal{W}_t \right] = 0,
	\end{equation*}
	\begin{equation*}
	\mathbb{E}\left[\int_{0}^{t} Y_s dW_s \vert \mathcal{W}_t \right] 
	= \int_{0}^{t}\mathbb{E}\left[ Y_s \vert \mathcal{W}_s\right] dW_s.
	\end{equation*}
\end{lem}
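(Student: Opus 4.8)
The plan is to prove both identities first for elementary integrands and then to pass to the limit. Since $Y$ is bounded and $(\mathcal{F}_t)$-adapted, it belongs to $L^2(\Omega\times[0,T])$ and can be approximated there by simple processes $Y^n_s=\sum_i \xi^n_{t_i}\mathbf{1}_{(t_i,t_{i+1}]}(s)$ with $\xi^n_{t_i}$ bounded and $\mathcal{F}_{t_i}$-measurable. For the second identity I would first record that $s\mapsto\mathbb{E}[Y_s\mid\mathcal{W}_s]$ is a legitimate integrand: it is $(\mathcal{W}_s)$-adapted and, by Remark \ref{remark on adaptedness}, has a predictable version, so that $\int_0^t\mathbb{E}[Y_s\mid\mathcal{W}_s]\,dW_s$ is well defined. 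The limits will then be taken using the It\^o isometry together with the fact that $\mathbb{E}[\,\cdot\mid\mathcal{G}]$ is an $L^2$-contraction.

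For the first identity it suffices to treat a single summand $\xi_{t_i}(B_{t_{i+1}\wedge t}-B_{t_i\wedge t})$ with $\xi_{t_i}$ bounded and $\mathcal{F}_{t_i}$-measurable. Conditioning through $\mathcal{F}_{t_i}\vee\mathcal{W}_t$ by the tower rule and pulling out the $(\mathcal{F}_{t_i}\vee\mathcal{W}_t)$-measurable factor $\xi_{t_i}$, the whole matter reduces to
\[
\mathbb{E}\big[\,B_{t_{i+1}\wedge t}-B_{t_i\wedge t}\ \big|\ \mathcal{F}_{t_i}\vee\mathcal{W}_t\,\big]=0.
\]
I would deduce this from the independence of the increment $B_{t_{i+1}\wedge t}-B_{t_i\wedge t}$ from $\mathcal{F}_{t_i}\vee\mathcal{W}_t=\mathcal{G}_{t_i}\vee\mathcal{W}_t$ (using $\mathcal{W}_{t_i}\subset\mathcal{F}_{t_i}$ and the compatible decomposition $\mathcal{F}_{t_i}=\mathcal{W}_{t_i}\vee\mathcal{G}_{t_i}$ of Definition \ref{defn:filtration}): the increment is independent of $\mathcal{G}_{t_i}\subset\mathcal{F}_{t_i}$ because $B$ is an $(\mathcal{F}_t)$-Brownian motion, independent of $\mathcal{W}_t$ because $B\perp W$, while $\mathcal{G}_{t_i}\perp\mathcal{W}_t$ follows from $\mathcal{G}_{t_i}\subset\mathcal{G}_t\perp\mathcal{W}_t$. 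Once the joint independence of $\sigma(B_{t_{i+1}\wedge t}-B_{t_i\wedge t})$, $\mathcal{G}_{t_i}$ and $\mathcal{W}_t$ is in hand, Lemma \ref{appendix lemma:conditioning} turns the conditional expectation into the unconditional mean, which is $0$.

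For the second identity the corresponding summand is $\xi_{t_i}(W_{t_{i+1}\wedge t}-W_{t_i\wedge t})$. Now the increment is $\mathcal{W}_t$-measurable, so it factors out of $\mathbb{E}[\,\cdot\mid\mathcal{W}_t]$ and leaves $\mathbb{E}[\xi_{t_i}\mid\mathcal{W}_t]$. The key reduction is $\mathbb{E}[\xi_{t_i}\mid\mathcal{W}_t]=\mathbb{E}[\xi_{t_i}\mid\mathcal{W}_{t_i}]$, which is precisely \eqref{conditional equivalence}, or equivalently Lemma \ref{appendix lemma:conditioning} applied to $\mathcal{W}_{t_i}$, $\mathcal{G}_{t_i}$ and the increment $\sigma$-algebra $\mathcal{W}_t^{t_i}$, with $\xi_{t_i}$ being $(\mathcal{W}_{t_i}\vee\mathcal{G}_{t_i})$-measurable and $\mathcal{W}_t^{t_i}$ independent of $\mathcal{F}_{t_i}$. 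Summing over $i$ reproduces $\sum_i\mathbb{E}[\xi_{t_i}\mid\mathcal{W}_{t_i}](W_{t_{i+1}\wedge t}-W_{t_i\wedge t})=\int_0^t\mathbb{E}[Y_s\mid\mathcal{W}_s]\,dW_s$ for the simple integrand.

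Finally, letting $n\to\infty$: for the first identity $\int_0^t Y^n\,dB\to\int_0^t Y\,dB$ in $L^2(\Omega)$, and the $L^2$-contractivity of $\mathbb{E}[\,\cdot\mid\mathcal{W}_t]$ forces $\mathbb{E}[\int_0^t Y\,dB\mid\mathcal{W}_t]$ to be the limit of the vanishing quantities $\mathbb{E}[\int_0^t Y^n\,dB\mid\mathcal{W}_t]=0$. For the second, the left-hand side converges the same way, while on the right the estimate $\mathbb{E}\int_0^t|\mathbb{E}[Y^n_s-Y_s\mid\mathcal{W}_s]|^2\,ds\le\mathbb{E}\int_0^t|Y^n_s-Y_s|^2\,ds\to0$ together with the It\^o isometry give $\int_0^t\mathbb{E}[Y^n_s\mid\mathcal{W}_s]\,dW_s\to\int_0^t\mathbb{E}[Y_s\mid\mathcal{W}_s]\,dW_s$. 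I expect the only real obstacle to be the measure-theoretic justification of the displayed identity $\mathbb{E}[B_{t_{i+1}\wedge t}-B_{t_i\wedge t}\mid\mathcal{F}_{t_i}\vee\mathcal{W}_t]=0$: the pairwise independences are immediate, but promoting them to the joint independence of $\sigma(B_{t_{i+1}\wedge t}-B_{t_i\wedge t})$, $\mathcal{G}_{t_i}$ and $\mathcal{W}_t$ required by Lemma \ref{appendix lemma:conditioning} must be argued from the compatibility of $(\mathcal{F}_t)$ (it is transparent in the leading example $\mathcal{F}_t=\mathcal{W}_t\vee\mathcal{B}_t\vee\mathcal{H}$, where $(W,B)$ has independent components and $\mathcal{H}\perp\sigma(W,B)$).
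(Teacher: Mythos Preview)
Your argument is correct and follows essentially the same route as the paper: approximate by elementary integrands, handle each summand via the appropriate independence, and pass to the limit using contractivity of conditional expectation. The differences are cosmetic---you approximate $Y$ by simple processes in $L^2$ and use the It\^o isometry, while the paper writes Riemann sums along partitions and passes to the limit in $L^1$; for the first identity you tower through $\mathcal{F}_{t_i}\vee\mathcal{W}_t$, whereas the paper invokes directly the rule $\mathbb{E}[BZ\mid\mathcal{W}]=\mathbb{E}[B]\,\mathbb{E}[Z\mid\mathcal{W}]$ when $B\perp\sigma(Z,\mathcal{W})$. Your treatment of the second identity and your reduction to $\mathbb{E}[\xi_{t_i}\mid\mathcal{W}_t]=\mathbb{E}[\xi_{t_i}\mid\mathcal{W}_{t_i}]$ via Lemma~\ref{appendix lemma:conditioning} match the paper exactly. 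The measure-theoretic point you flag---upgrading the pairwise independences to the independence of $B_{t_{i+1}}-B_{t_i}$ from $\mathcal{F}_{t_i}\vee\mathcal{W}_t$---is the same fact the paper uses without further comment; you are right that it is transparent in the leading example and that in the general compatible setting it needs to be read off the structure of $(\mathcal{F}_t)$.
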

\begin{proof}
	We start with the first equality. Let $\pi^n = \left\{ t_i^n \; : \: i=1,\dots, n  \right\}$ be a sequence of partitions of $[0, t]$ with mesh size going to zero as $n\to 0$, such that
	\begin{equation}\label{integral sequence}
	\lim_{n\to\infty} \mathbb{E} \left \vert \sum_{[t^n_i, t^n_{i+1}] \in \pi^n} Y_{t_i}\left(B_{t_{i+1}} - B_{t_i}\right)
	- \int_{0}^{t} Y_s dB_s \right \vert  \to 0.
	\end{equation}
	As a straightforward consequence of Jensen's inequality, one has that convergence in $L^1$ implies convergence in $L^1$ of the conditional expectations. We can conclude the proof with the following observation
	\begin{equation*}
	 \mathbb{E}\left[ \sum_{[t_i, t_{i+1}] \in \pi^n} Y_{t_i}\left(B_{t_{i+1}} - B_{t_i}\right) \mid \mathcal{W}_t\right] \\
	=  \sum_{[t_i, t_{i+1}] \in \pi^n}  \mathbb{E}\left[B_{t_{i+1}} - B_{t_i}\right]\mathbb{E}\left[ Y_{t_i} \mid \mathcal{W}_t\right] 
	=0,
	\quad \mathbb{P}-a.s.
	\end{equation*}
	Here we used the following property of the conditional expectation: if $B$ is independent of $\sigma(Y, \mathcal{W})$, then $\mathbb{E}[BY \mid \mathcal{W}] = \mathbb{E}[B]\mathbb{E}[Y \mid \mathcal{W}]$.
	
	The second part of the lemma can be proved in a similar way, taking into account the additional observation that, for every $0 \leq s \leq t \leq T$, 
	\begin{equation*}
	\mathbb{E}\left[
	Y_s \mid \mathcal{W}_t
	\right] 
	= 
	\mathbb{E}\left[
	Y_s \mid \mathcal{W}_s
	\right],
	\quad \mathbb{P}-a.s.
	\end{equation*}
	This follows from Lemma \ref{appendix lemma:conditioning}.
\end{proof}
\bibliographystyle{abbrv}
\bibliography{bibliography} 

\end{document}